\theoremstyle{definition}
\newtheorem{definition}{Definition}[section]
\newtheorem{remark}[definition]{Remark}
\theoremstyle{plain}
\newtheorem{lemma}[definition]{Lemma}
\newtheorem{theorem}[definition]{Theorem}
\newtheorem{proposition}[definition]{Proposition}
\DeclareMathOperator*{\argmin}{argmin}
\DeclareMathOperator*{\argmax}{argmax}
\newcommand{\x}{\bm{\mathrm{x}}}
\newcommand{\X}{\bm{\mathrm{X}}}
\newcommand{\y}{\bm{\mathrm{y}}}
\newcommand{\e}[2]{{#1}\mathrm{e}{#2}}
\numberwithin{equation}{section}
\numberwithin{table}{section}
\begin{document}
\counterwithin{lstlisting}{section}

\pagenumbering{Alph}

\title{{Bachelor thesis} \\[18pt] \Huge Introduction: Swarm-based gradient descent for non convex optimization \\[18pt]}
\author{Janina Tikko}
\date{18.09.2023}

\maketitle

\begin{center}
\begin{Large}
Department of Mathematics and Computer Science \\[3pt]
Division of Mathematics \\[3pt]
University of Cologne \\[2cm]
\begin{figure}[ht]
	\centering
\includegraphics[scale=0.4]{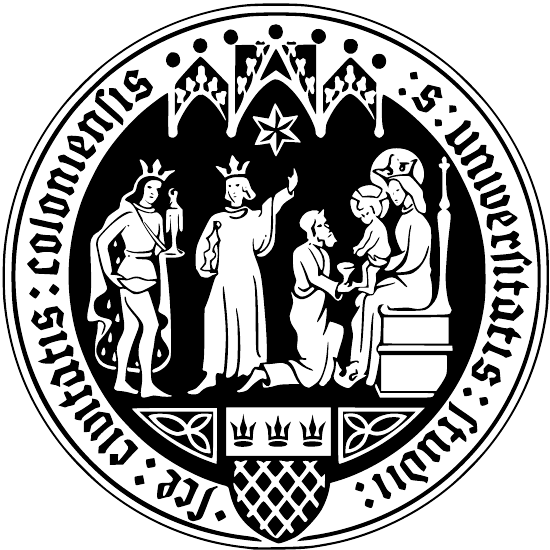}
 \\[2cm]
\end{figure}
Supervisor: Prof. Dr. Angela Kunoth
\end{Large}
\end{center}

\thispagestyle{empty}


\newpage
\subsection*{Acknowledgements}
\thispagestyle{empty}

At this point, I would like to thank Prof. Angela Kunoth for the opportunity to write a bachelor thesis under her supervision. The topic she gave me is based on the work of Prof. Eitan Tadmor and was fascinating. Moreover she encouraged me to contact Prof. Eitan Tadmor directly to help answer my questions. Therefore I would like to thank him for our correspondence. For tips and comments I am also thankful to Sarah Knoll, who I could also turn to at any time.
I would also like to thank my family, my partner and a friend for proofreading.


\newpage
\tableofcontents

\pagenumbering{roman}


\pagenumbering{arabic}

\newpage
\section{Introduction}
The field of optimization has the goal to find an optimal solution to a target function. This means to minimize (or maximize) the target function.
Such optimization problems are found in several scientific disciplines, for example in physics or computer science.
Often it is not possible to find the analytical solution, thus one has to consider numerical approaches.
\\\\
We consider a general optimization problem
\begin{align}
	\argmin_{\x\in \Omega \subset\mathbb{R}^d} F(\x)
	\label{eg:1.1}
\end{align}
for a function $F:\Omega \subset\mathbb{R}^d \to \mathbb{R}$.
 To find a minimum, we can use the classical gradient descent method. First, we compute the local gradient $\nabla F(\x)$ to find a search direction and after choosing a step size, we can run along the function to search for a minimum. Once the gradient approaches zero, we know that a minimum has been found. However, initially we can only assume a local minimum. If we want to find a global minimum, this method is often not suitable. For example the function
 $S(x)=(1.5t-2)^2 \cdot \cos(30\pi+(3\pi t)^2)$ in figure \ref{fig:signal} has many local minima and one global minimum.

 \begin{figure}[ht]
 	\centering
 \includegraphics[scale=0.33]{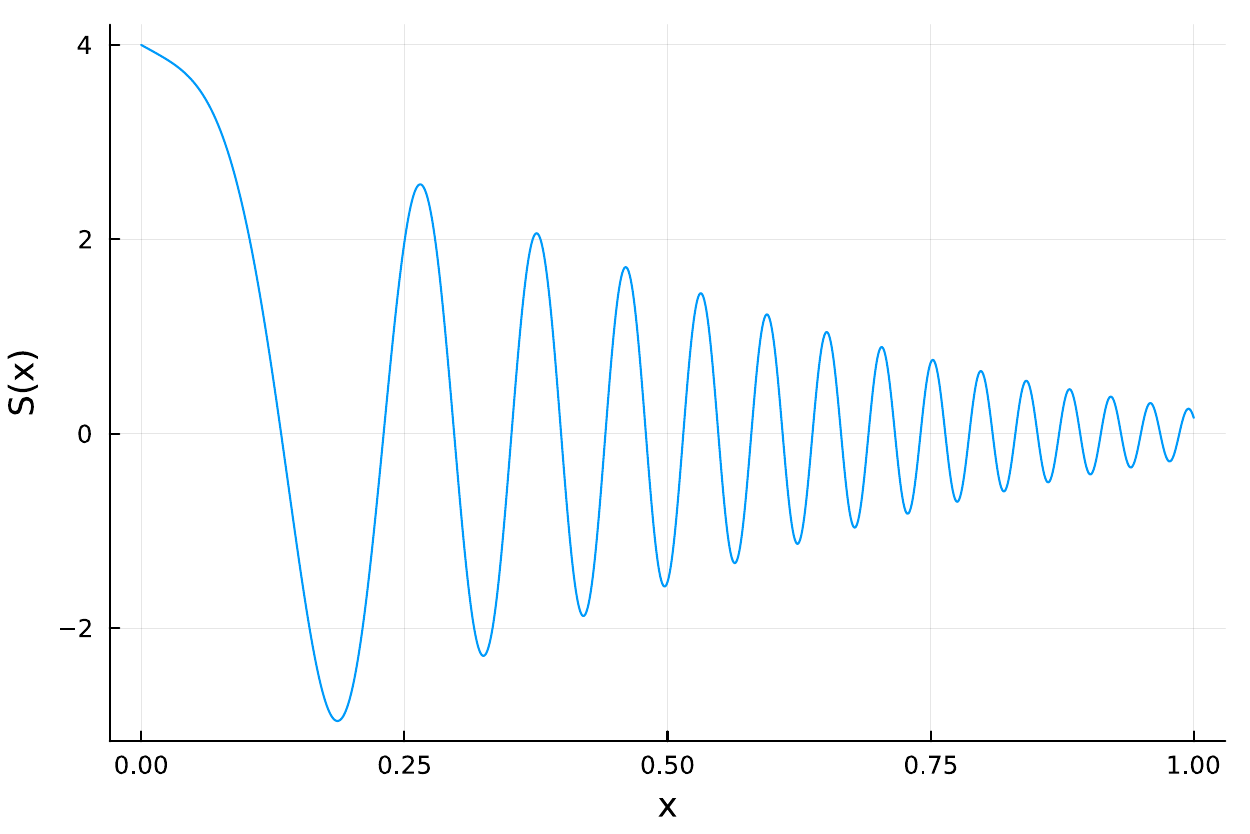}
 \caption{
 Graph of $S(x)=(1.5t-2)^2 \cdot \cos(30\pi+(3\pi t)^2)$.
 }
 \label{fig:signal}
 \end{figure}
 If the starting point is not well chosen, the classical gradient descent method would stop at a local minimum because the gradient equals zero and therefore the method would never reach the global minimum. In the case of the function $S$, a starting position outside of the first quarter of the interval would lead to an incorrect result. Therefore, the subject of this paper is the introduction of the swarm-based gradient descent for solving the global optimization problem \eqref{eg:1.1} based on \cite{tad}.
\\\\
As implied by the name, a swarm of multiple agents is used for finding a global minimum. The agents are characterized by a time-dependent position $\x_i(t^n)\in\Omega \subset\mathbb{R}^d$ and a relative mass $\tilde{m}_i(t^n)\in[0,1]$. Each agent is also given its own step size, defined by its relative mass. Heavier agents receive smaller step sizes and converge to local minima, while lighter agents have a larger step size, improving the global position of the swarm.
As animals in nature, the agents in a swarm communicate with each other. This communication leads to a mass transition between the agents, so that lighter
agents have a possibility to grow heavier and therefore converge to a potential global minima. Another characteristic of this method is the
\glqq Survival of the fittest\grqq{} approach. After each iteration the \glqq worst\grqq agent will be eliminated from the swarm.
\\\\
A further explanation of the algorithm with more detail is given in chapter 2.
In chapter 3, I will also demonstrate the method with an example. Thereby the advantages compared to the gradient descent become clearer.
After that, in chapter 4, I show my implemented version in the programming language Julia, before finally in chapter 5 the convergence and error analysis follows.

\subsection*{The idea behind the communication}
As mentioned before, the classical gradient descent is often the first approach to find a solution for a problem like \eqref{eg:1.1}.
And because of the disadvantages, one might start to try and improve this method.
Naturally the first thing to do would be to consider more than one explorer. Like in daily life, a group is often faster in solving a problem than one individual.
Moreover, a group could be spread around the target function to help avoiding being trapped in local minima. In section \ref{section 3.2} we will see that this is
an improvement, but the method still has some disadvantages.
\\\\
Since we have a group of explorers now, the question is how to improve the group's behavior. For this purpose, we can consider nature's principles.
In nature there are several swarms of animals that act in groups in order to survive. The question is now, what is the characteristic of a swarm?
\\
A swarm is based on both a number of individuals, also called agents, and an interaction process between them. To describe this process, we consider the Cucker-Smale \cite{cuc} model. It describes a pairwise interaction between agents, that steer the swarm towards average heading. The interactions are dictated by a communication kernel. That means, to improve our group of agents, a design of a communication kernel is needed. This leads us to the swarm-based gradient descent method.


\section{Algorithm Swarm-based gradient descent}
The swarm-based gradient method (SBGD) involves three main aspects:
the agents, the step size protocol, and the communication. For determining the step size, we will use the backtracking method \ref{section 2.3} as explained in \cite{tad}.

\subsection{The agents}

The algorithm uses $J\in\mathbb{N}$ agents from $\mathbb{R}^d \times (0,1]$. For $i=1,...,J$ each agent is characterized by a position $\x_i(t)\in\mathbb{R}^d$
and a mass $m_i(t)\in (0,1]$.
The total mass of all agents is constant at all times
\begin{align*}
	\sum_{i\in J} m_i(t) = 1 .
\end{align*}

\subsection{The step size protocol}

In each iteration the agents positions are dynamically adjusted by a time step $h_i$ in direction of the gradient $\nabla F(\x_i(t))$
\begin{align*}
	\frac{\mathrm{d}}{\mathrm{d}t} \x_i(t) = -h_i \nabla F(\x_i(t)).
\end{align*}
The time step $h_i$ depends on the current position $\x_i(t)$ and the relative mass of the agent, where the relative mass of the agent is defined as
\begin{align*}
	\tilde{m}_i(t):=\frac{m_i(t)}{m_+},
\end{align*}
with $m_+:=\max\limits_{i\in J} m_i(t)$. The function $h_i$ should therefore be chosen as a decreasing function of the relative masses, i.e. heavier agents get smaller time steps while lighter agents receive larger step sizes.
\\\\
\begin{remark}
The relative mass $\tilde{m}_i$ can alternatively be understood as the probability of the agent to find a global minimum. Agents with $m_i(t) \ll m_+(t)$ have to get larger step sizes, because at the current position the probability to find a minimum is rather low.
\end{remark}

\subsection{Backtracking}
\label{section 2.3}

To compute the time step $h_i$ we use the \emph{backtracking line search} method based on the Wolfe conditions \cite{wol}.
In each iteration we want to take a time step in direction of the gradient $\nabla F(\x(t))$, thus
\begin{align*}
	\x^{n+1}=\x^n -h\nabla F(\x^n).
\end{align*}
The idea of the \emph{backtracking line search} method is to choose a time step $h$ in such a way, that
\begin{align}
	F(\x^{n+1}(h)) \leq F(\x^n)-\lambda h \vert \nabla F(\x^n)\vert ^2, \hspace{1cm} \lambda\in (0,1).
	\label{eg:2.1}
\end{align}
Of course for $h \ll 1$ is
\begin{align*}
	F(\x^{n+1}(h))=F(\x^n)- h \vert \nabla F(\x^n)\vert ^2 + \{Terms \ of \ higher \ order\}
\end{align*}
and therefore \eqref{eg:2.1} holds for any fixed $\lambda\in (0,1)$. However, this is not what we want and numerically not useful.
We want the step size $h$ as large as possible, so that we can maximize the descent towards $\lambda h \vert \nabla F(\x^n)\vert ^2$.
Therefore, we start with a large $h$ such that
\begin{align*}
	F(\x^{n}-h\nabla F(\x^n)) > F(\x^n)-\lambda h \vert \nabla F(\x^n)\vert ^2
\end{align*}
holds.
Then we successively decrease the step size with a shrinking factor $\gamma>0$ until \eqref{eg:2.1} is reached for $h=h(\x^{n},\lambda)$.
\\\\
Now the choice of $\lambda$ can be problematic. For larger $\lambda$ the step size is limited and no larger jumps are possible.
If $\lambda \ll 1$ there is a danger of taking too small steps and stopping at a local minimum. To get around this, we use the relative mass of the
individual agents to adjust $\lambda$. To do this, define $\psi_q (\tilde{m}^{n+1}):=(\tilde{m}^{n+1})^q$, with $q>0$.
For $i=1,...,J$ we thus obtain the step size
\begin{align}
	h_i^n=h(\x_i^n,\lambda\psi_q(\tilde{m}_i^{n+1})).
	\label{eg:2.2}
\end{align}
The parameter $q$ determines the influence of the relative mass. That means if $q$ is larger, $\psi_q(\tilde{m}_i^{n+1})$ will be smaller and agents with
a relative mass in the middle range can get larger time steps from the backtracking method. By default, we assume $q=1$.

\subsection{The communication}

Communication is the aspect in which the SBGD method differs from others. Considering the relative heights of each agent, we redistribute the masses in each iteration step. The lowest positioned agent attracts the mass from the other agents to approach a potential minimum through smaller step sizes. Meanwhile the other
agents become lighter and lighter and thus explore further in the region of interest.
But a lighter agent may be better positioned after a large time step and thus become the new heaviest agent and therefore approaches a new potential minimum.
This mass transition is described as follows:
\\\\
Set $F_{\max}(t):=\max\limits_{i\in J} F(\x_j(t))$ and $F_{\min}(t):=\min\limits_{i\in J} F(\x_j(t))$. At time $t$, $F_{\max}(t)$ is the maximum height and $F_{\min}(t)$ is the minimum height of the swarm. Thus, we define the relative height of an agent as
\begin{align}
    \label{eg:relativeheights}
	\eta_i(t):=\frac{F(\x_i(t))-F_{\min}(t)}{F_{\max}(t)-F_{\min}(t)} > 0.
\end{align}
With $i^*:=\argmin\limits_{i\in J} F(\x_i(t))$ we then describe the mass transition by
\begin{align*}
\begin{dcases}
	\frac{\mathrm{d}}{\mathrm{d}t} m_i(t) = - \phi_p(\eta_i(t))m_i(t) & i\neq i^* \\
	m_i(t) = 1- \sum\limits_{j\neq i^*} m_j(t) & i=i^* \ ,
\end{dcases}
\end{align*}
where $\phi_p(\eta) = \eta^p \in (0,1]$ and $p>0$. By default $p=1$, but it can be adjusted for optimization purposes.

\subsection{Time discretization}

For the time discretization, we set $t^{n+1}=t^n + \Delta t$, with $\Delta t =1$. Thus, for the i-th agent $\x_i^{n+1} = \x_i(t^{n+1})$ is the position, and
$m_i^{n+1} = m_i(t^{n+1})$ the mass at time $t^{n+1}$. For the initialization of the algorithm, we set all agents to random positions $\{\x_i^0\}$
and all agents are given the uniformly distributed mass $\{m_i^0=\frac{1}{J}\}$. Then we proceed with all agents with $m_i^n>0$ in each iteration as follows:
\begin{align}
	\begin{dcases}
	m_i^{n+1} &=  m_i^n - \phi_p(\eta_i^n)m_i^n, \hspace{1cm} i\neq i_n \\
	m_{i_n}^{n+1} &= m_{i_n}^n + \sum\limits_{i\neq i_n} \phi_p(\eta_i^n)m_i^n \\
	&= 1- \sum\limits_{i\neq i_n} m_i^n + \sum\limits_{i\neq i_n} \phi_p(\eta_i^n)m_i^n \\
	m_+^{n+1}&:= \max\limits_{i\in J} m_i^{n+1} \\
	\x_i^{n+1}&= \x_i^n -h(\x_i^n,\lambda\psi_q(\tilde{m}_i^{n+1}))\nabla F(\x_i^n)
\end{dcases}
\label{eg:2.3}
\end{align}
with $i_n:=\argmin\limits_{i} F(\x_i^n)$ and $\tilde{m}_i^{n+1}=\frac{m_i^{n+1}}{m_+^{n+1}}$. First, we apply communication and redistribute the masses so that the best-positioned agent becomes the heaviest. After that, each agent is given a time step by the
backtracking method and we update the positions.
By computing $\x_+ := \argmax_{i\in J} F(\x_i^n)$ we find the \glqq worst \grqq agent, which will be eliminated. Repeating this will leave us with the heaviest agent.

\section{Example}
\label{section3}

To better understand how the SBGD method works, I would like to demonstrate an example. We are looking for the global minimum of the
function $F(x) = e^{\sin(2x^2)}+\frac{1}{10}(x-\frac{\pi}{2})$ on the interval $[-3,3]$.

\subsection{The swarms movement}

We apply the SBGD method with ten agents and the backtracking parameters $\lambda = 0.2$ and $\gamma = 0.9$.
For simplicity, we initialize the agents with equal distance to each other as shown in Figure \ref{fig:1}. Let the size of the points represent the
masses of the individual agents. Initially, all
agents have the same mass, therefore they are all the same size. The global minimum is located in the area $[1,2]$ and two agents are placed nearby. One of them is expected to approach the minimum. To the right in the subinterval $[2,3]$ is a local minimum, where another agent is closely placed. We expect this agent to become initially the heaviest agent of the swarm.\\

\begin{figure}[ht]
	\centering
\includegraphics[scale=0.33]{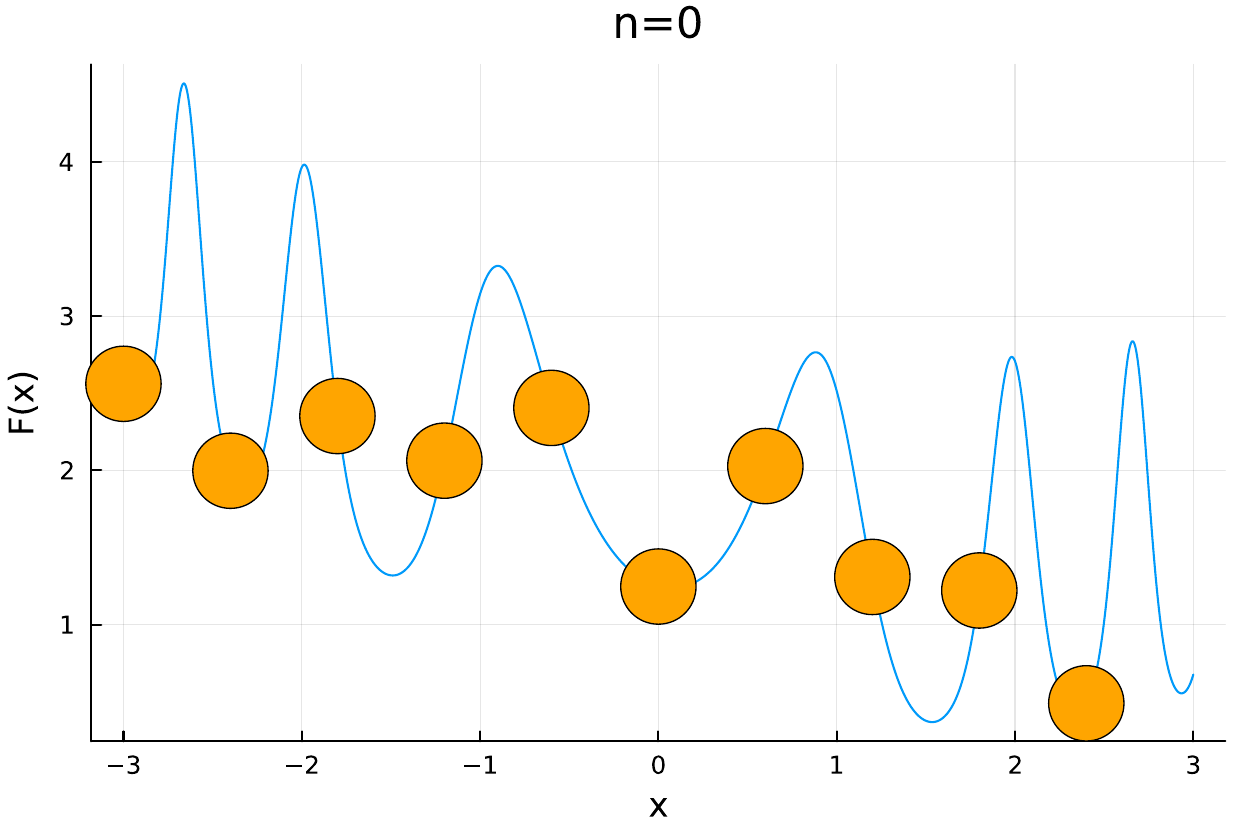}
\caption{
Agents (orange) on the graph of $F(x)$ in iteration $n=0$.
}
\label{fig:1}
\end{figure}

In Figure \ref{fig:2} we can observe the movement of the swarm. After one iteration the agents are already different in size and weight. As assumed, the agent near the local minimum pulls the mass of the others towards it. We also see that another agent is already approaching the global minimum. However, this one also loses its mass to the heaviest agent. After five iterations, two of the lighter agents have approached the global minimum. From now on, the mass distribution changes, as seen in iteration 7. Due to the better position near the global minimum, one of the lighter agents pulls mass from the heaviest agent and the remaining others to itself. Thus, it becomes the new heaviest agent and converges towards the global minimum in the further iterations.

\begin{figure}[ht]
	\centering
\includegraphics[scale=0.33]{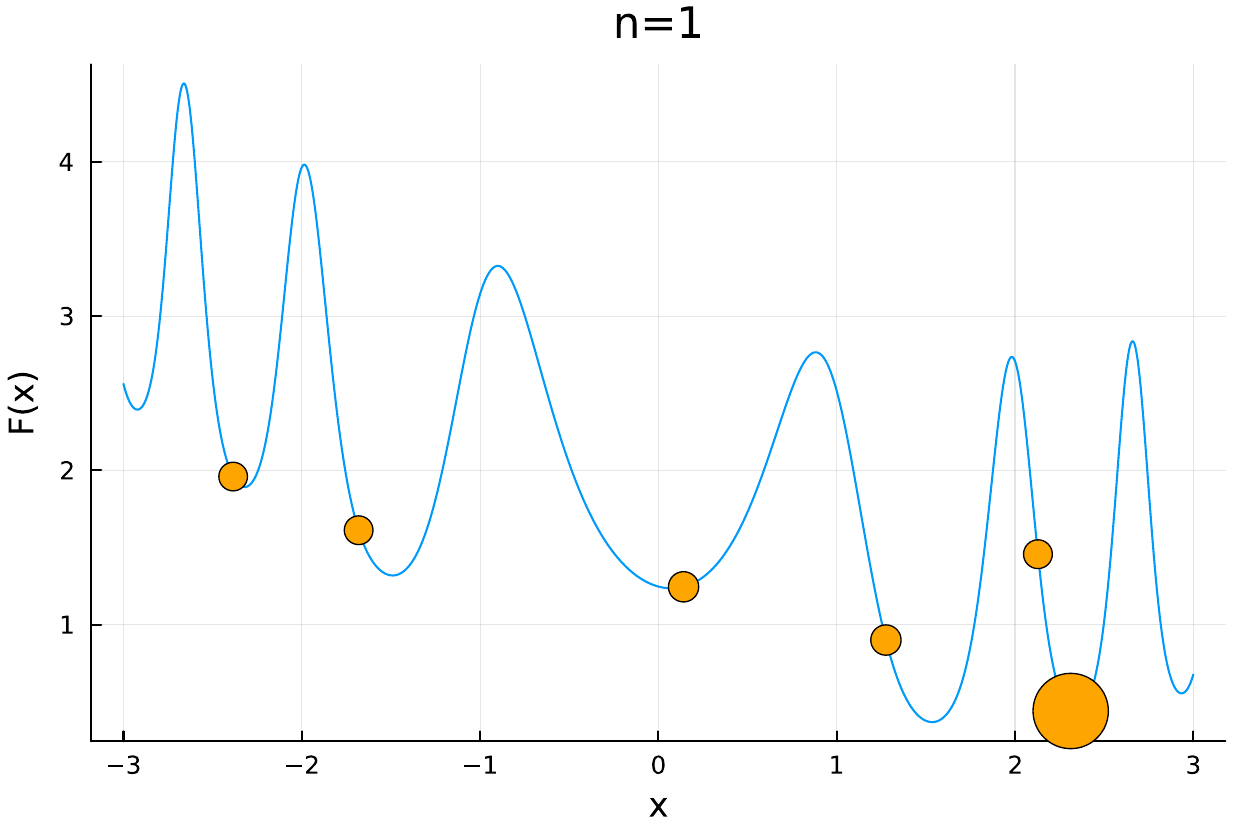}
\includegraphics[scale=0.33]{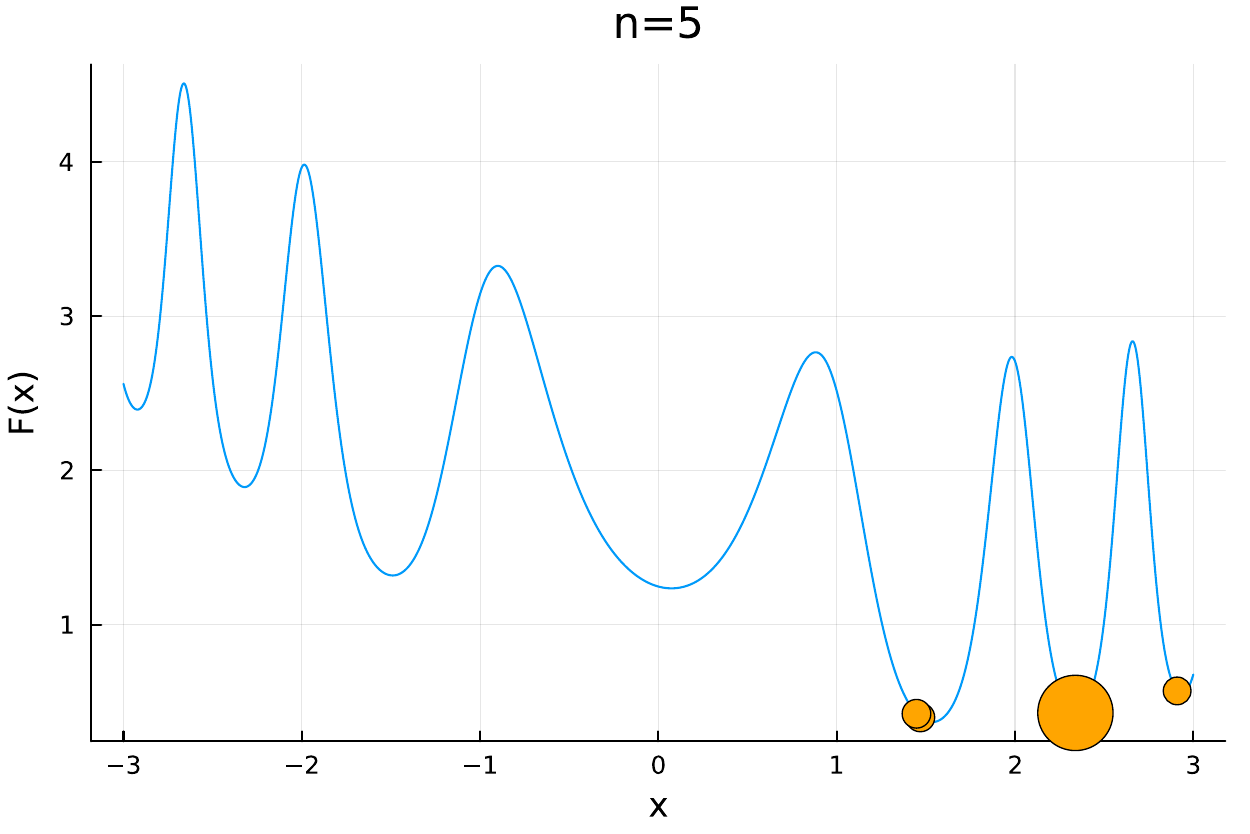}
\includegraphics[scale=0.33]{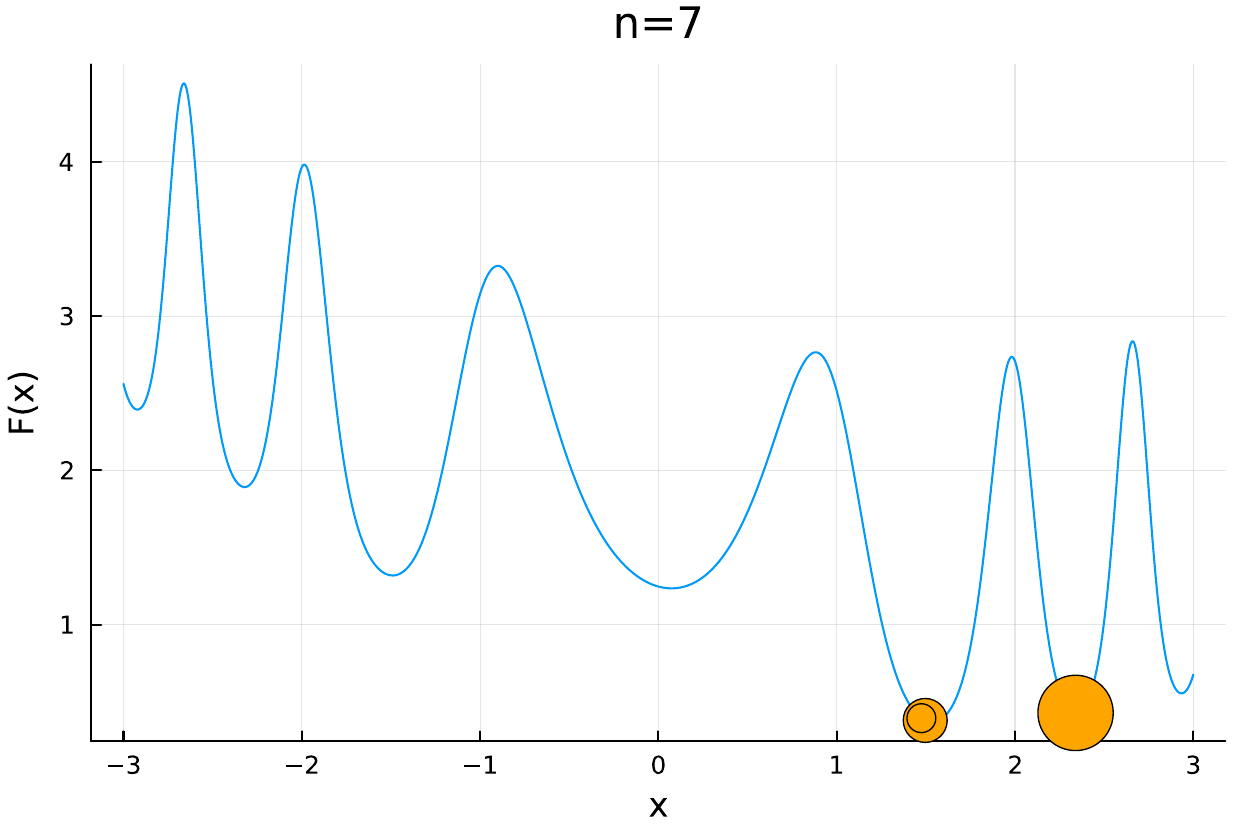}
\includegraphics[scale=0.33]{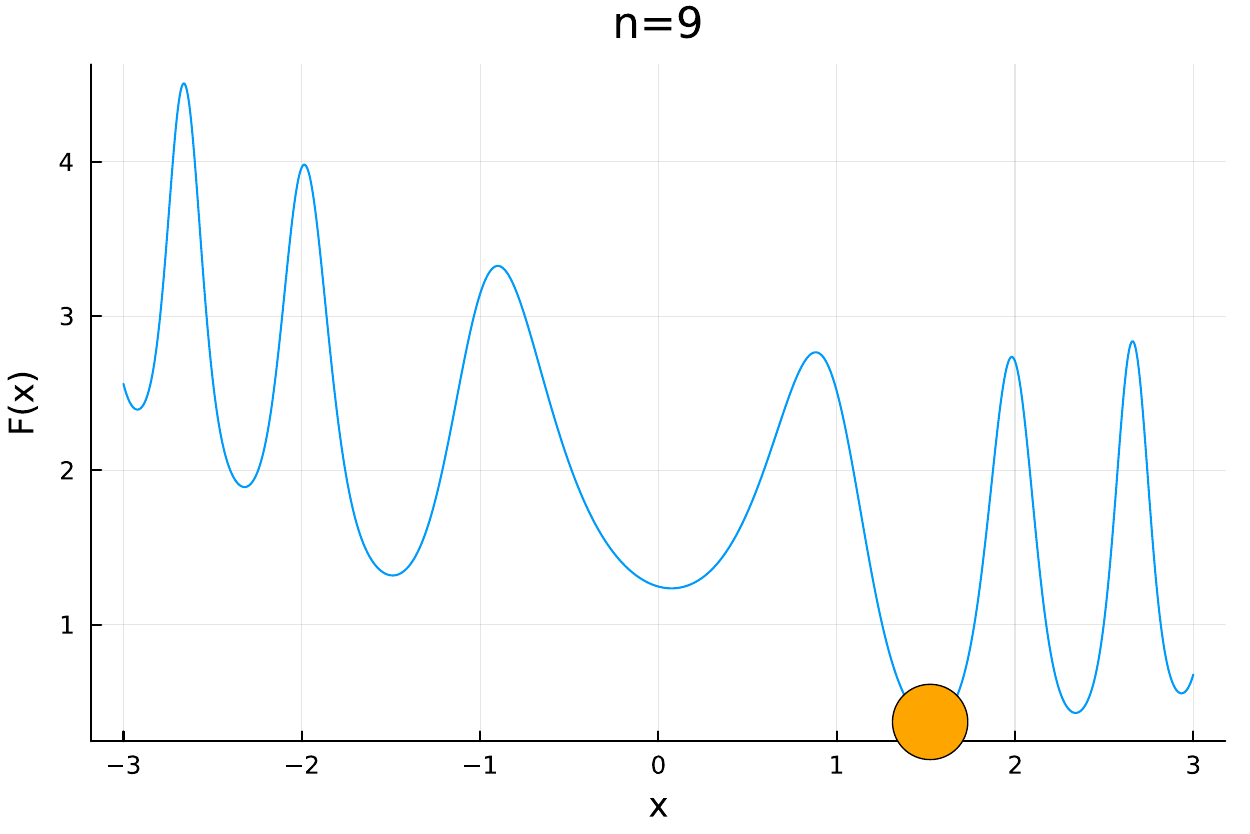}
\caption{
Agents (orange) on the graph of $F(x)$ in iteration $n=1,5,7,9$.
}
\label{fig:2}
\end{figure}

\clearpage
\subsection{Communication is the key}
\label{section 3.2}
The communication aspect is the key element in the efficiency of SBGD. In \cite{tad} the authors compared the SBGD method with backtracking gradient descent method
and the Adams method. Compared to both methods, SBGD had an overall better performance. In addition, I want to show the advantages of SBGD with a visual comparison to the backtracking gradient descent.
\\\\
Therefore consider the SBGD method but without communication between the agents. That means there is no mass transition, $m_i^n \equiv \frac{1}{J}$, and
$\psi_q(\tilde{m}_i^n)\equiv 1$ yields
\begin{align*}
	\x_i^{n+1}&= \x_i^n -h(\x_i^n,\lambda)\nabla F(\x_i^n), \hspace{1cm} i=1,\hdots, J.
\end{align*}
If we apply this method with the same backtracking parameters on the example before, we notice a different movement behavior of the agents towards the global minimum  (see figure \ref{fig:3}).
The agents flock together in groups and are trapped in the basins of local minima. Because of the equidistant initialization over the whole interval, one group of agents is able to reach the global minimum.
But the initial starting positions of the agents determine if they can reach the global minimum or not. As shown in figure \ref{fig:4}, if we move all agents
to the left side of the interval, the backtracking gradient descent stops before the global minimum. On the other hand, the SBGD method leads the swarm
further to reach the wanted global minimum.

\begin{figure}[ht]
	\centering
\includegraphics[scale=0.3]{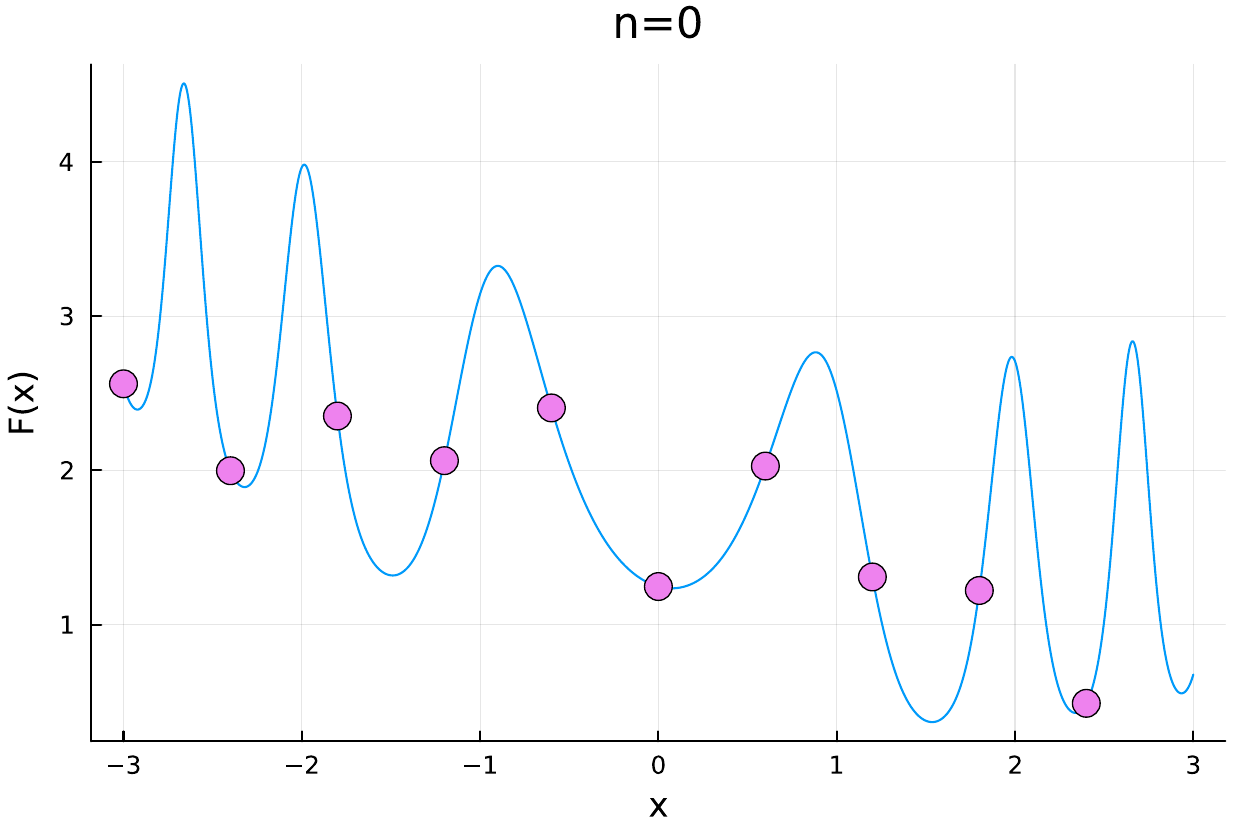}
\includegraphics[scale=0.3]{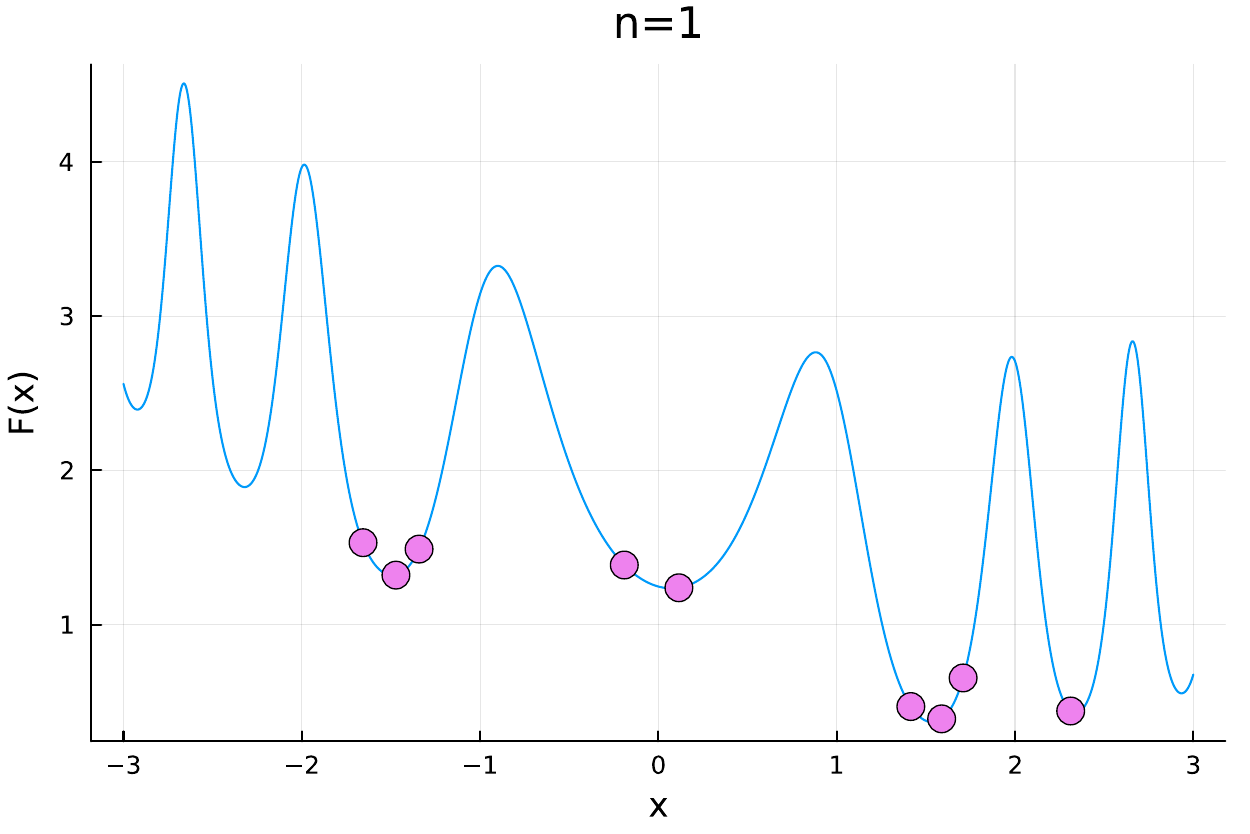}
\includegraphics[scale=0.3]{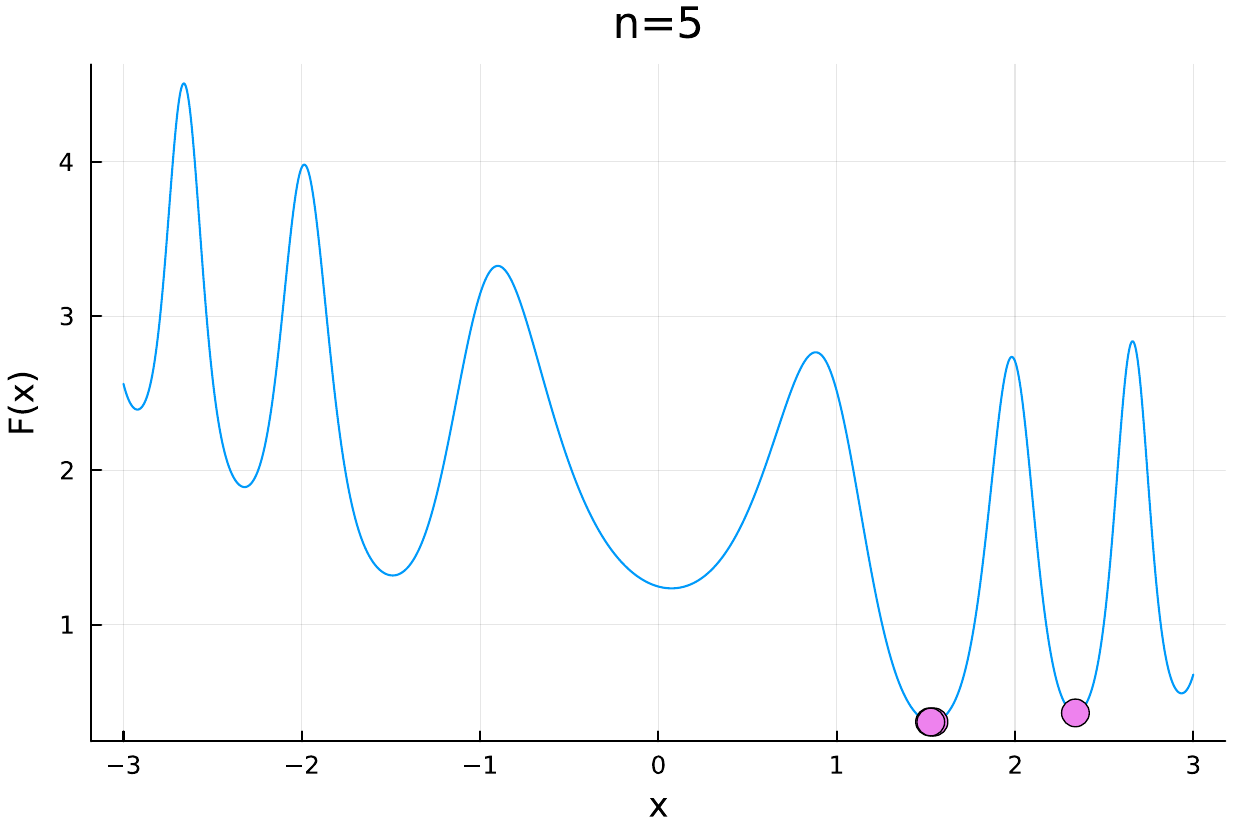}
\includegraphics[scale=0.3]{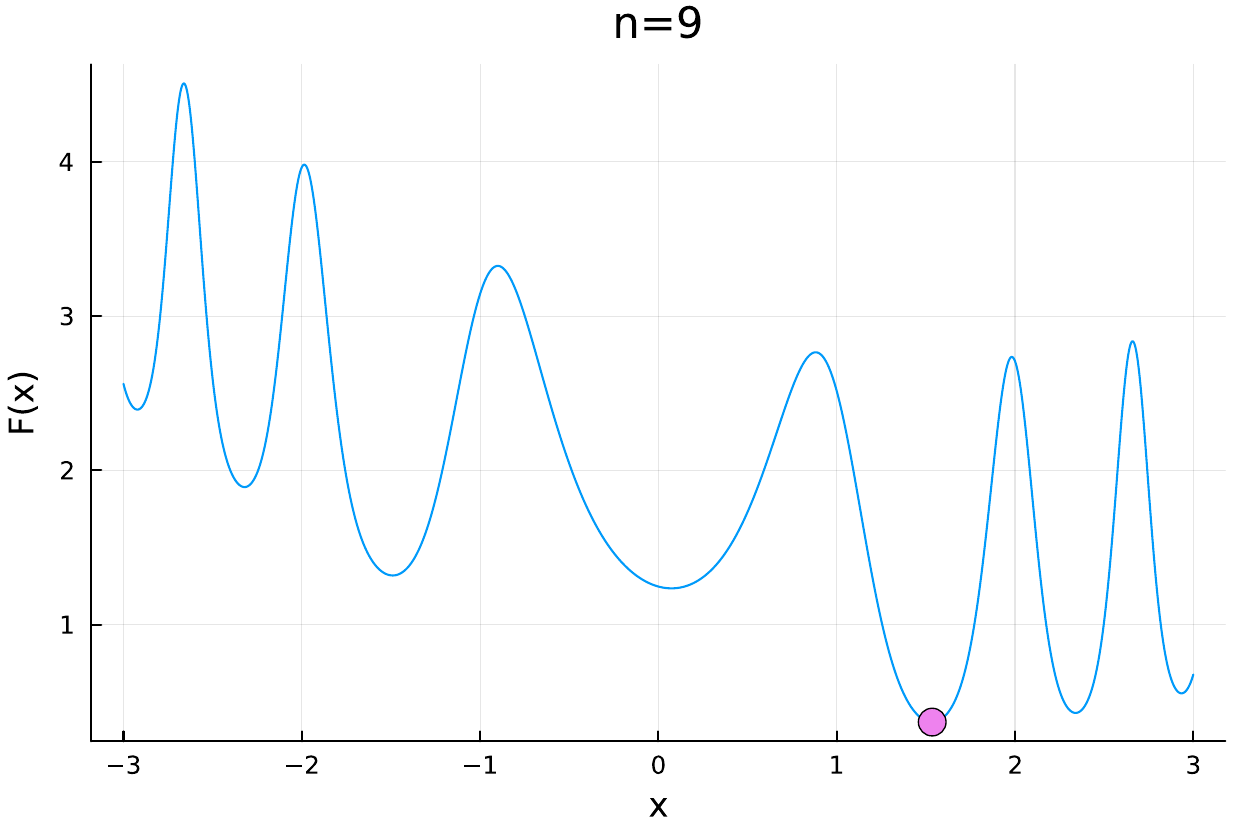}
\caption{
Agents (violet) progress with backtracking gradient descent.
}
\label{fig:3}
\end{figure}

\begin{figure}[ht]
	\centering
\includegraphics[scale=0.3]{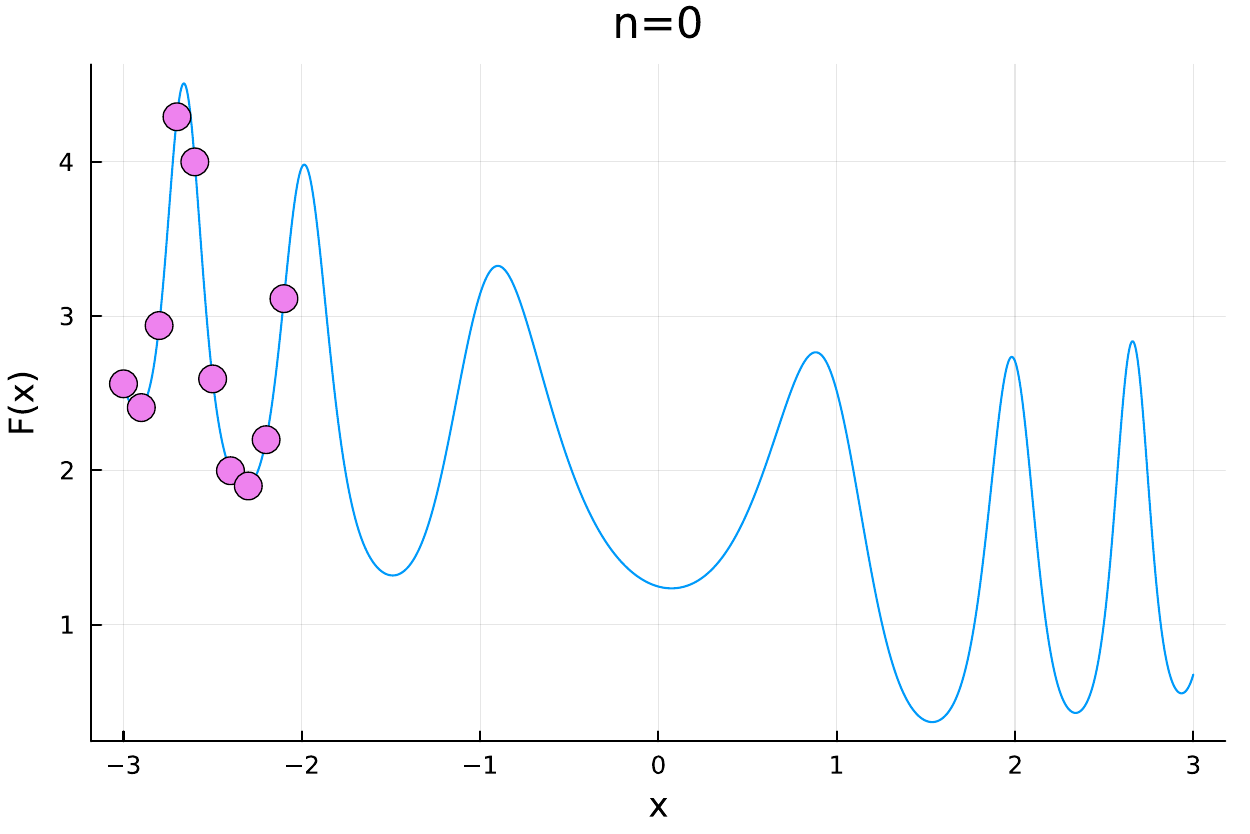}
\includegraphics[scale=0.3]{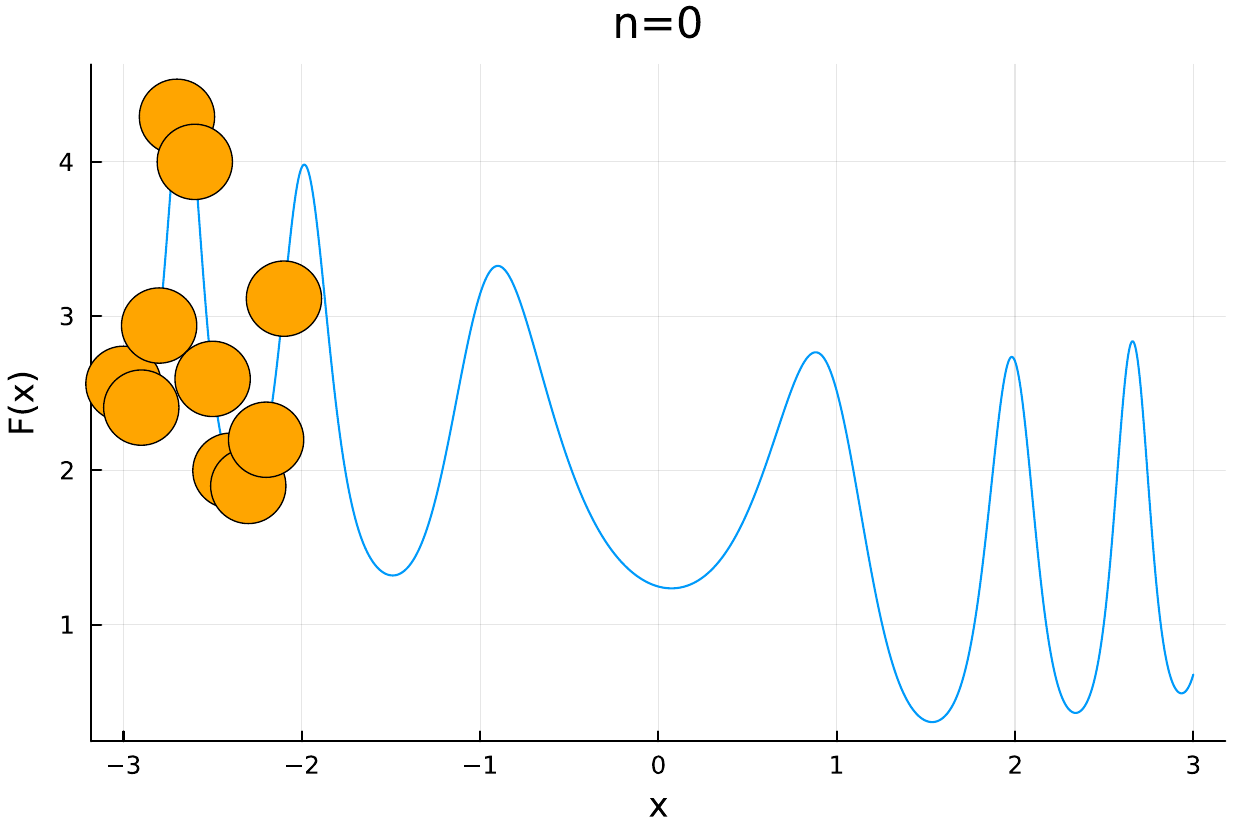}

\includegraphics[scale=0.3]{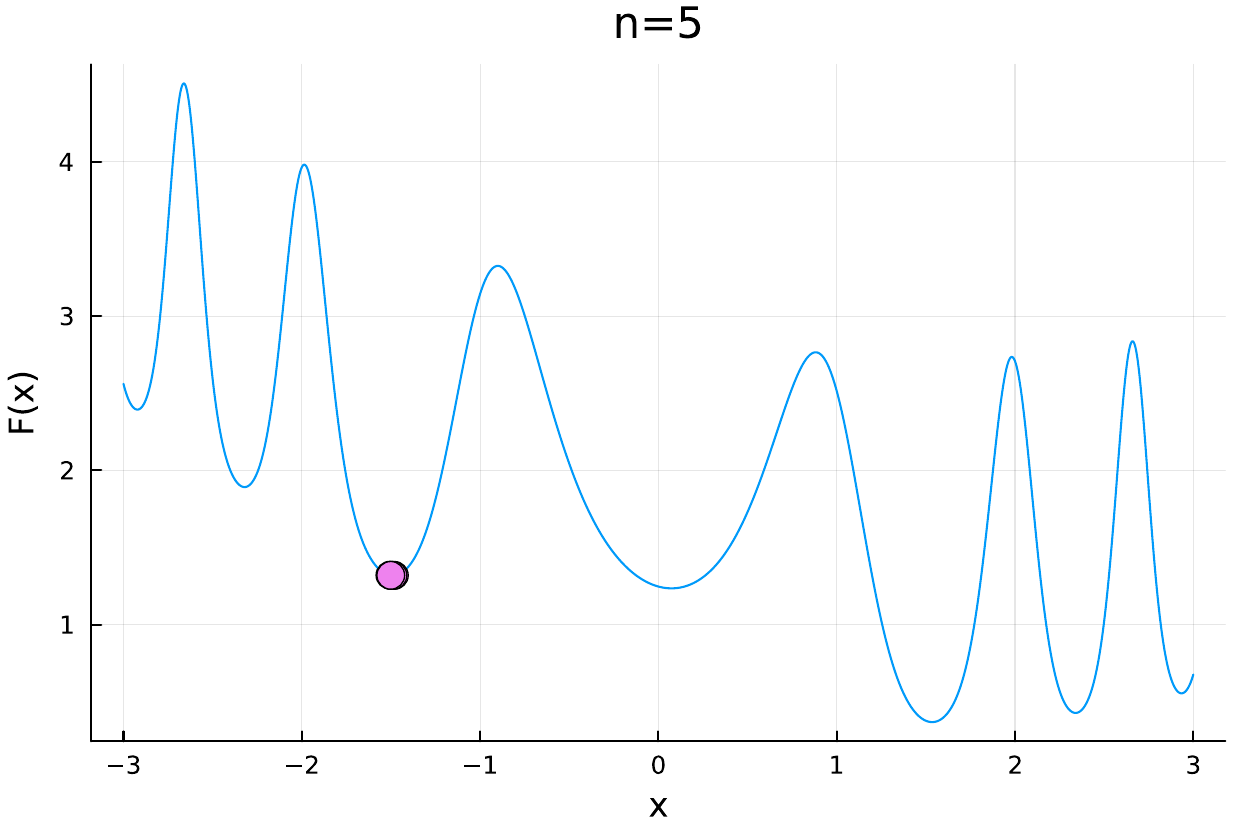}
\includegraphics[scale=0.3]{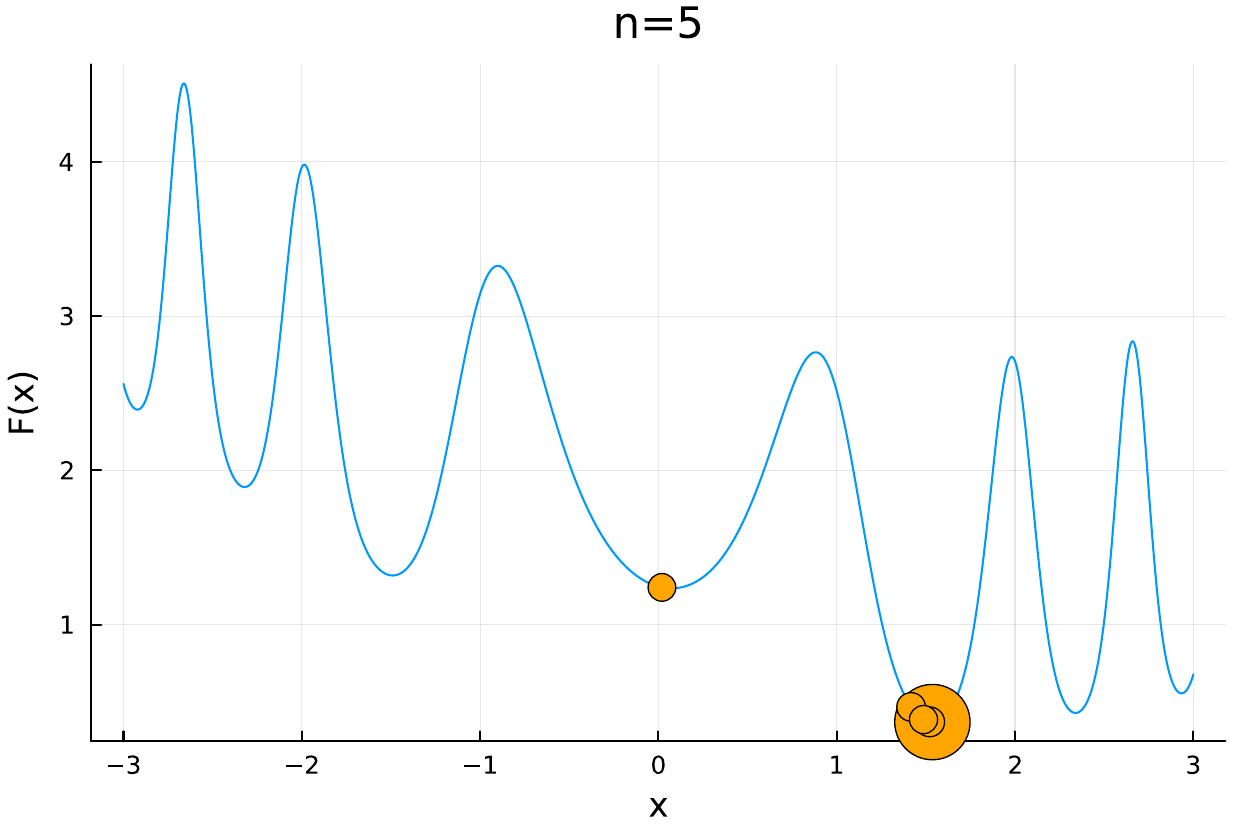}

\includegraphics[scale=0.3]{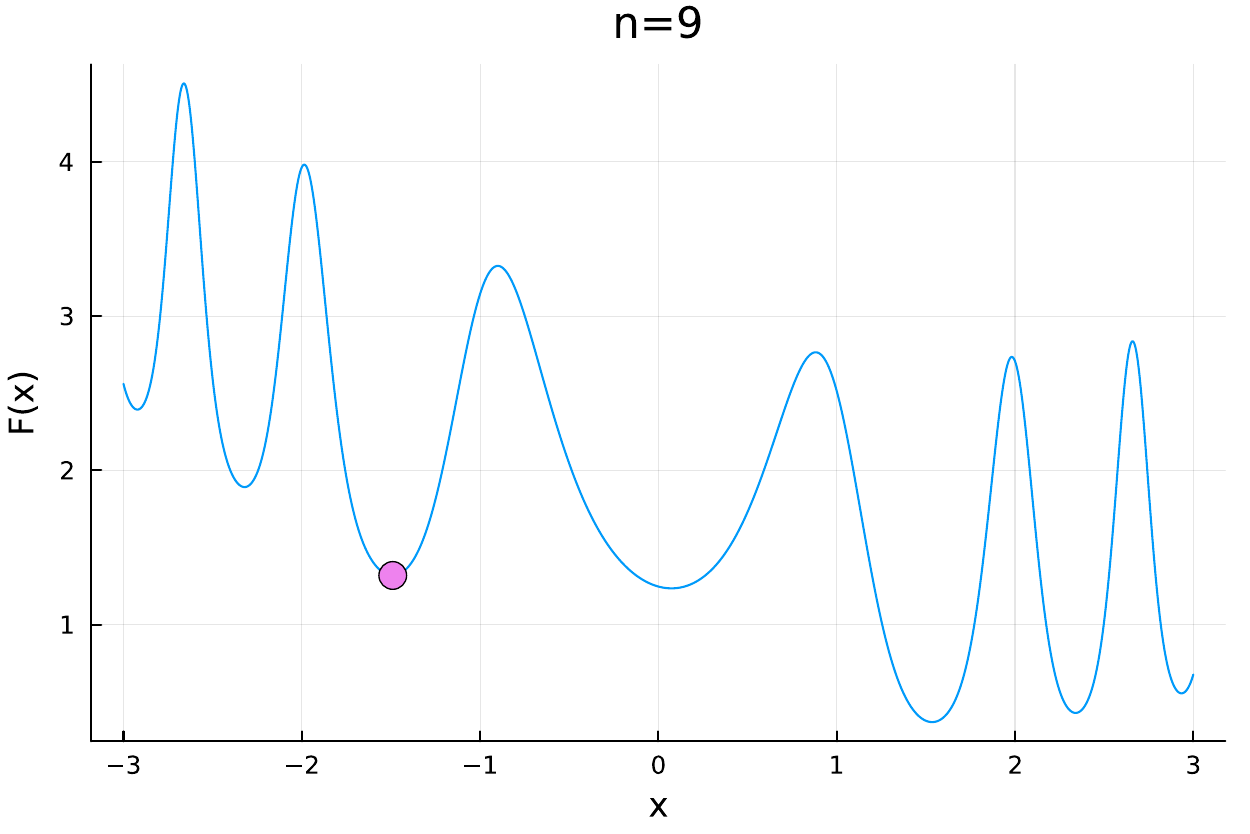}
\includegraphics[scale=0.3]{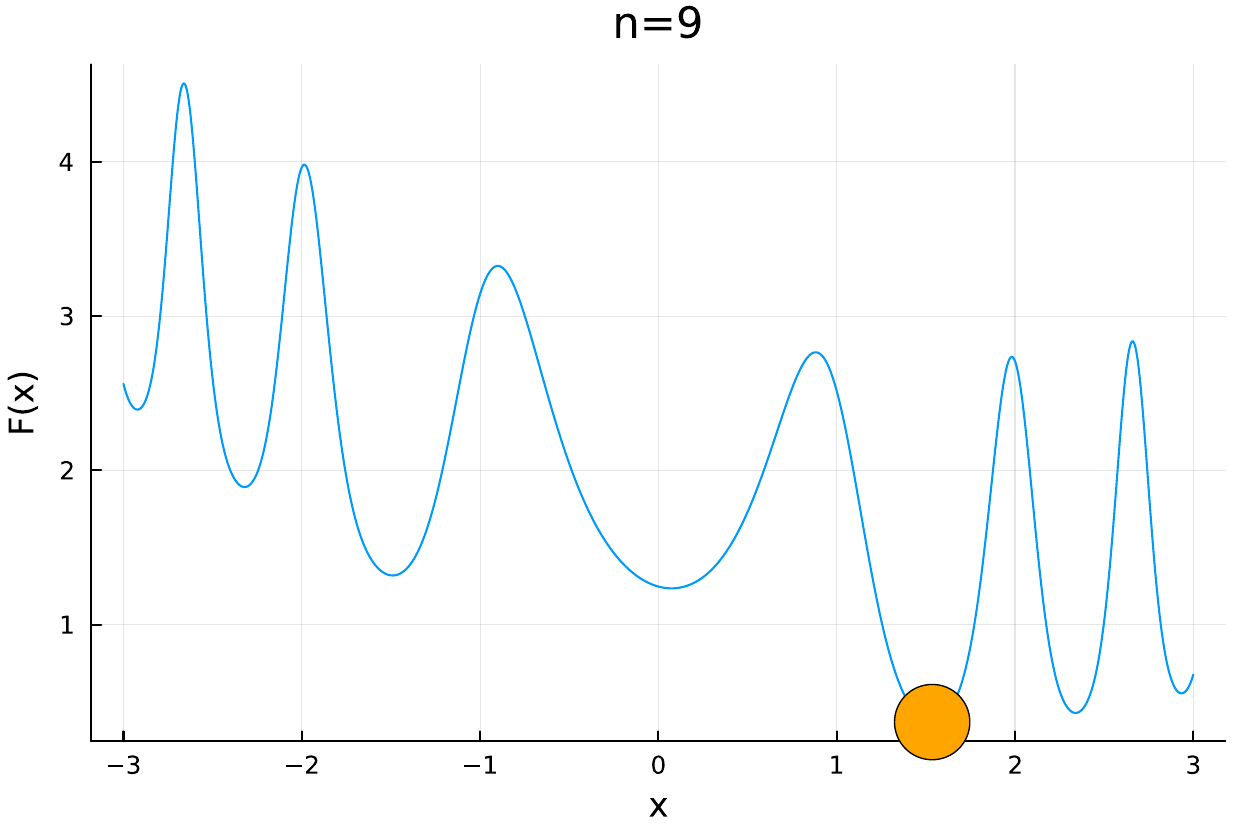}
\caption{
Agents progress with backtracking descent on the left and SBGD on the right.
}
\label{fig:4}
\end{figure}

\clearpage

\subsection{The parameters p and q}
By default we assume $p=q=1$. The question is, how does changing these two parameters affect the $SBGD_{p,q}$ method and its results?
In \cite{tad} is mentioned, that the parameter $p$ has a low influence level, while $q$ has more significant influence and therefore can be used for
fine-tuning purposes. As mentioned in \ref{section 2.3}, $q$ determines the influence of the relative mass. If $q$ is larger, $\psi_q(\tilde{m}_i^{n+1})$ will be
smaller and agents with a relative mass in the middle range receive larger time steps from the backtracking method. \\\\

To demonstrate the influence of $q$, which affects the swarms movement, I want to consider the two cases with different starting positions (equidistant vs left sided) from before.
For the case with equidistant starting positions as shown in figure \ref{fig:5}, we notice that more agents approach the global minimum with $SBGD_{1,2}$, than with $SBGD_{1,1}$.
Moreover, mass gaining of the minimizer happens faster with $SBGD_{1,2}$ due to its better position, gained from a better stepsize.
The worst case left sided scenario however (see figure \ref{fig:6}), seems to be more challenging for $SBGD_{1,2}$, than for $SBGD_{1,1}$. While the agents with $SBGD_{1,1}$ approach the global minimum
directly, the agents with $SBGD_{1,2}$ are more widely spread over the interval. Hence the minimizer gains weight later with $SBGD_{1,2}$, than with $SBGD_{1,1}$. \\\\
Therefore, we find that the speed and the movement of the swarm are influenced by $q$. Although the swarms behave different for both cases, in the end the global minimum is reached visually. But we need to evaluate, if both results are equally good.
In table \ref{tbl:1} we see the deviation from the result for the second case (left sided starting positions) with different
$p$ and $q$ pairings. First of all, we can agree with \cite{tad} that $p$ has no significant influence on the results. However, increasing $q$ has an effect.
But increasing $q$ alone does not lead to better results. We can see, that for SBGD with ten agents, an increasing of $q$ leads to worse results.
On the other hand, increasing both $q$ and the number of acting agents provide us with better results. For this case, $SBGD_{1,3}$ with 20 agents
has the best results. Although increasing agents seems to have more influence, than increasing $q$. For $q=1$ the results for 20 and 50 agents are equal to
$q=2$. Therefore we can conclude, for SBGD to provide us with good results, we need to consider different numbers of agents and different values of $q$.\\\\

\begin{table}[ht]
    \centering
    \begin{tabular}{c|c|c|c|c}
        \# Agents       & $p=1$            & $p=2$            & $p=3$ \\
        \hline \hline
        10              & $\e{5.32}{-5}$   & $\e{4.00}{-4}$   & $\e{3.90}{-3}$\\
        20              & $\e{1.16}{-6}$   & $\e{1.31}{-6}$   & $\e{1.06}{-6}$ & $q=1$\\
        50              & $\e{1.17}{-6}$   & $\e{1.17}{-6}$   & $\e{1.17}{-6}$\\
        \hline
        10              & $\e{1.60}{-3}$   & $\e{6.00}{-4}$   & $\e{2.03}{-5}$\\
        20              & $\e{2.10}{-6}$   & $\e{1.24}{-6}$   & $\e{1.16}{-6}$ &$q=2$\\
        50              & $\e{1.17}{-6}$   & $\e{1.17}{-6}$   & $\e{1.17}{-6}$\\
        \hline
        10              & $\e{1.20}{-2}$   & $\e{6.60}{-3}$   & $\e{1.23}{-2}$\\
        20              & $\e{9.88}{-8}$   & $\e{6.60}{-7}$   & $\e{6.20}{-7}$ &$q=3$\\
        50              & $\e{1.17}{-6}$   & $\e{1.17}{-6}$   & $\e{1.17}{-6}$\\
    \end{tabular}
    \caption{Deviation from $\x^* \approx 1.5355$ with $SBGD_{p,q}$ on the leftsided case.}
    \label{tbl:1}
\end{table}

\begin{figure}[ht]
	\centering
\includegraphics[scale=0.3]{plot2.pdf}
\includegraphics[scale=0.3]{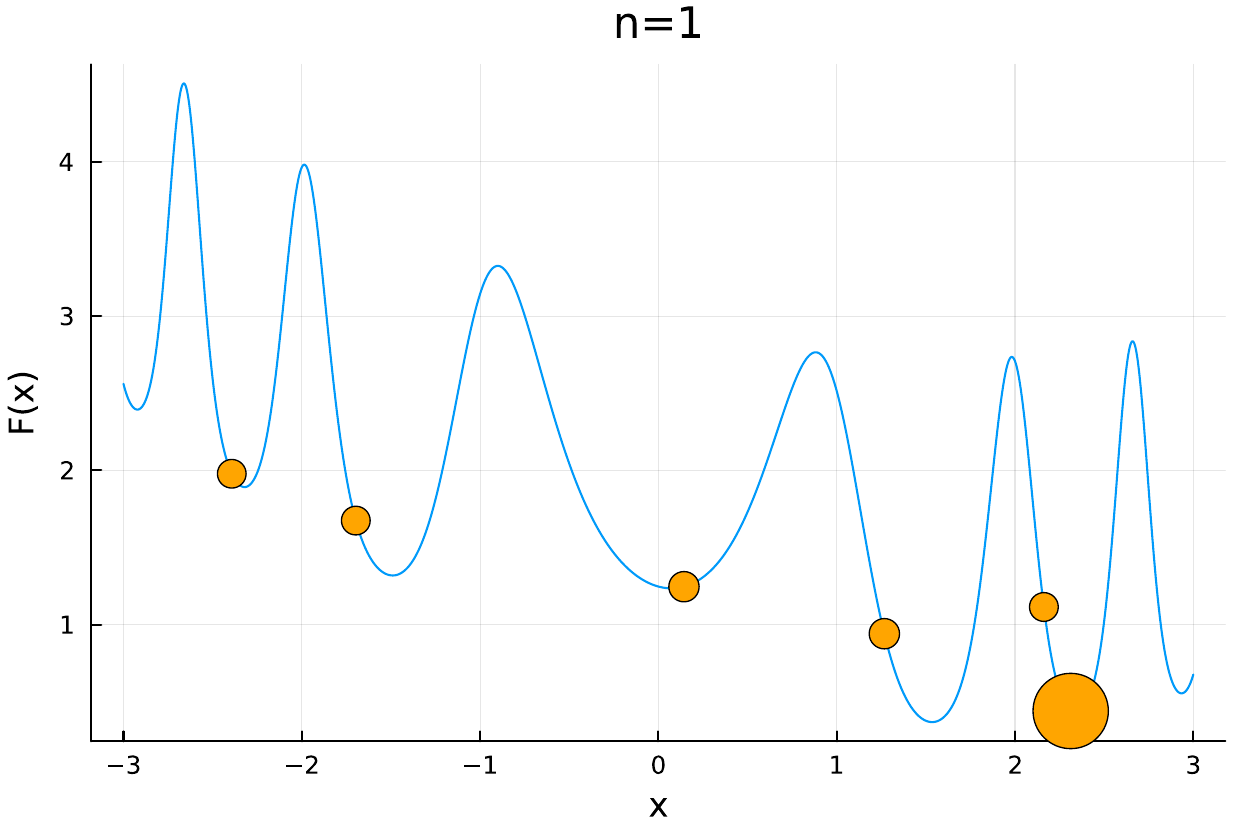}

\includegraphics[scale=0.3]{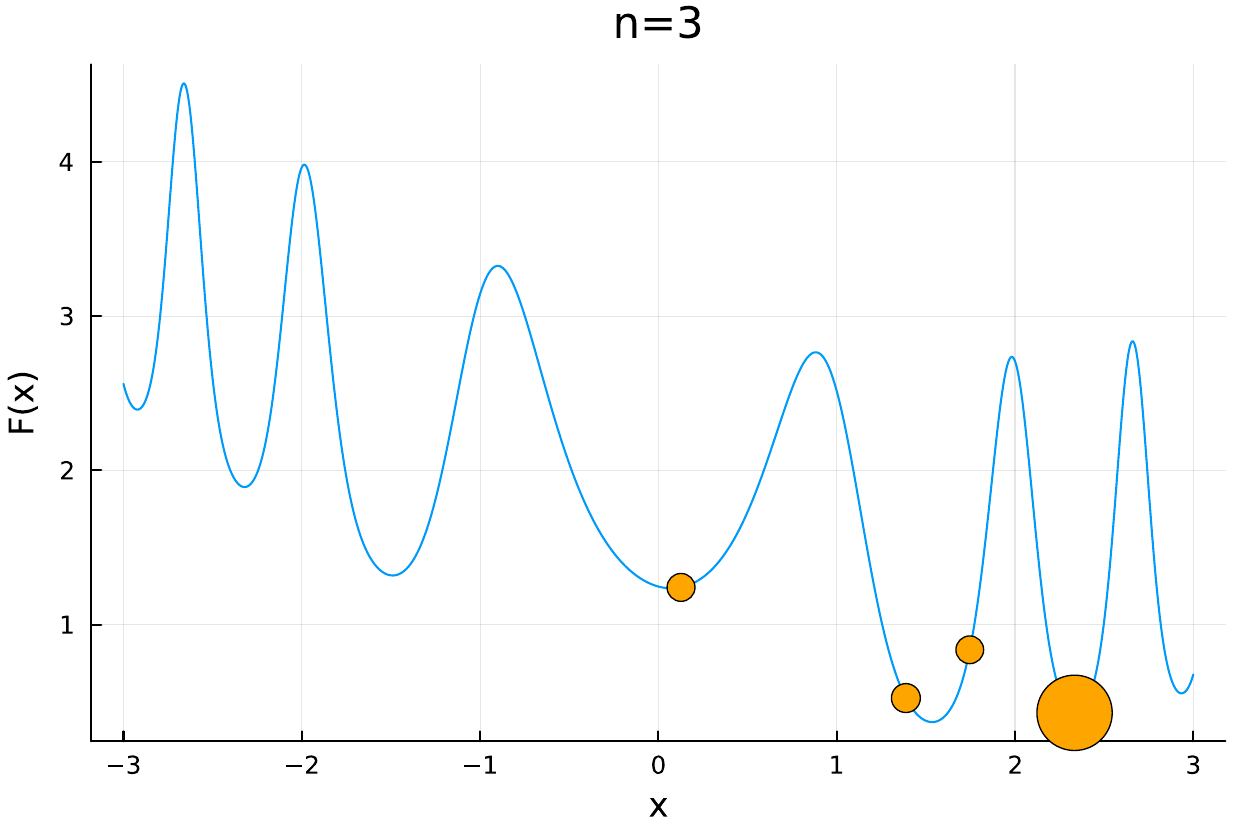}
\includegraphics[scale=0.3]{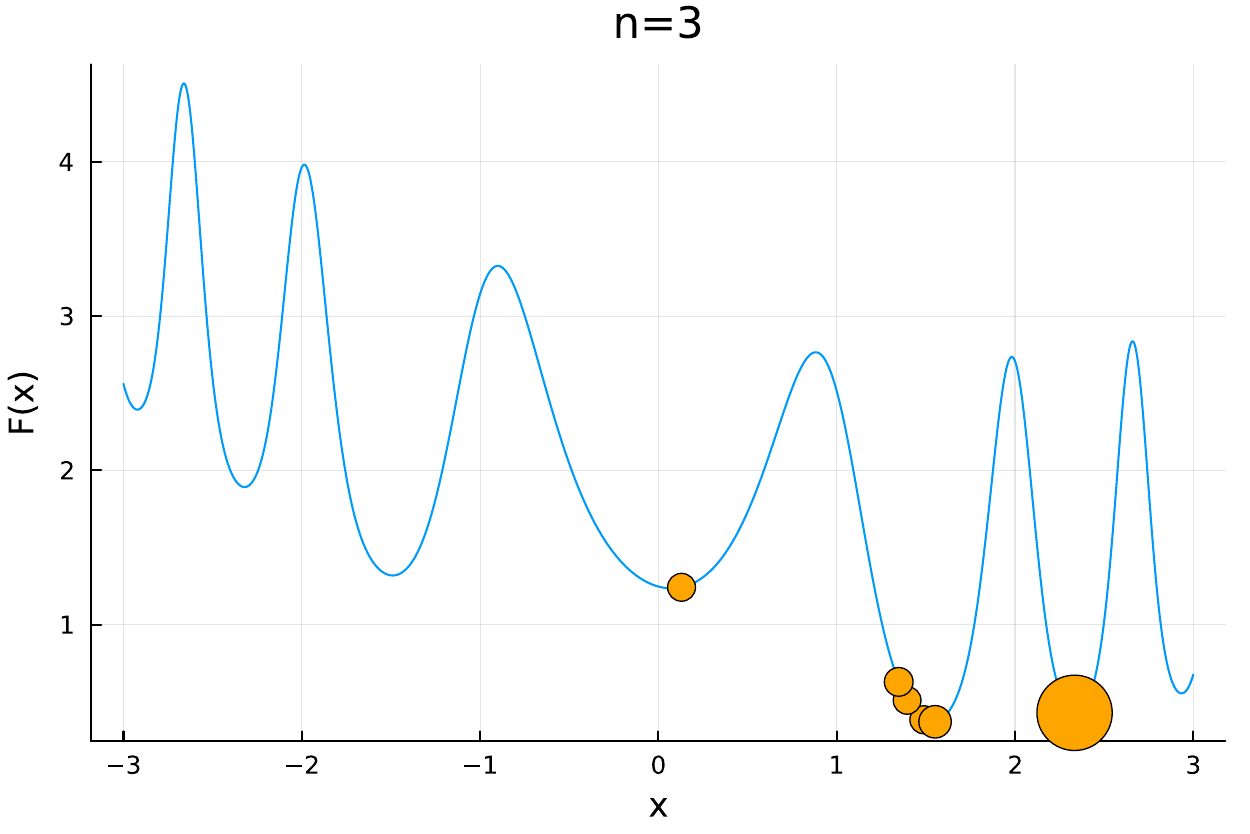}

\includegraphics[scale=0.3]{plot3.pdf}
\includegraphics[scale=0.3]{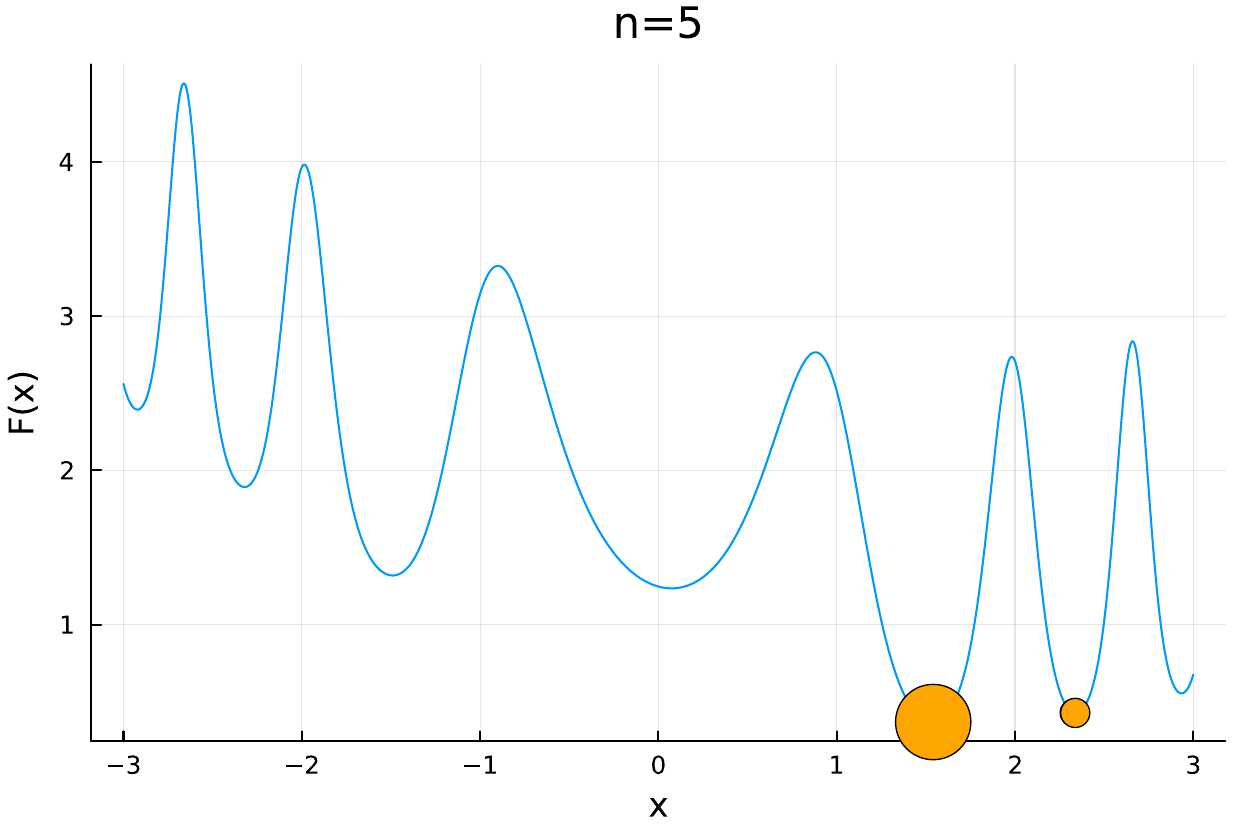}
\caption{
$SBGD_{1,1}$ on the left and $SBGD_{1,2}$ on the right with equidistant starting positions.
}
\label{fig:5}
\end{figure}

\begin{figure}[ht]
	\centering
\includegraphics[scale=0.3]{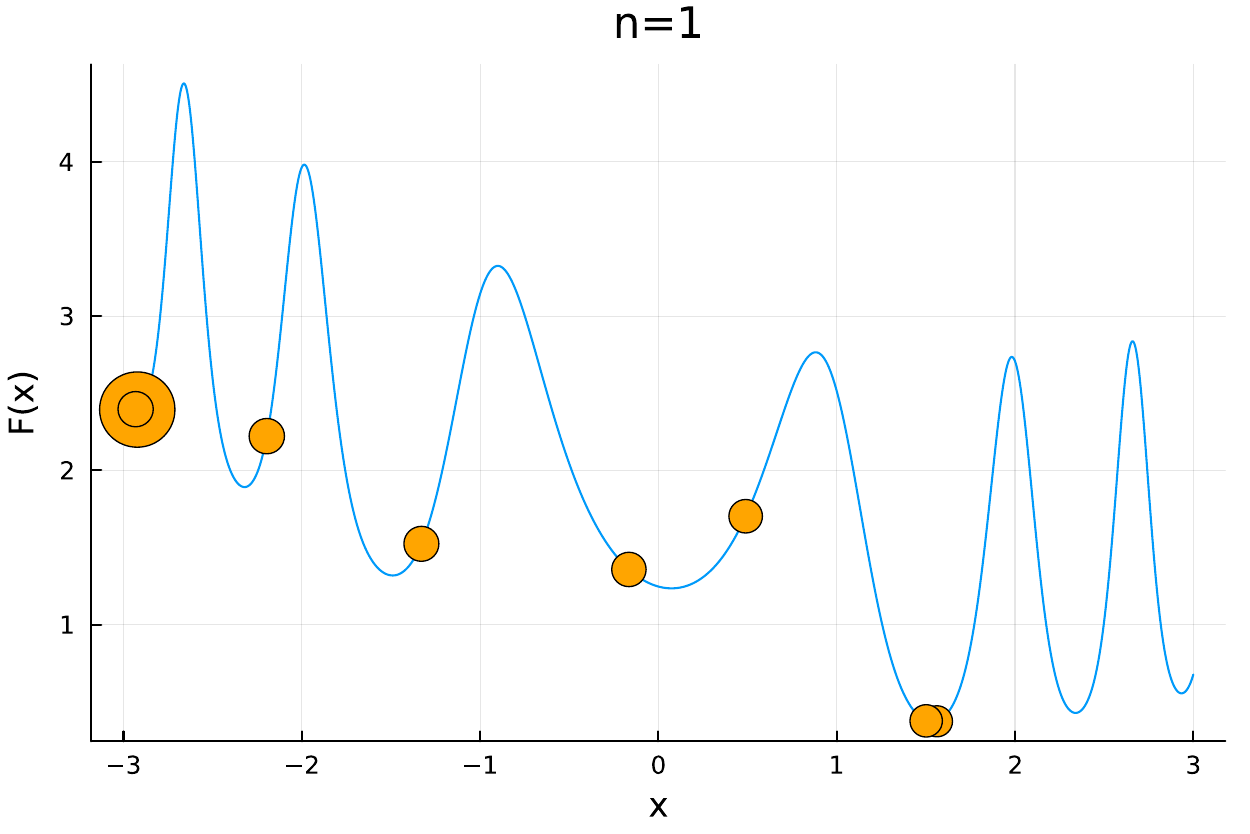}
\includegraphics[scale=0.3]{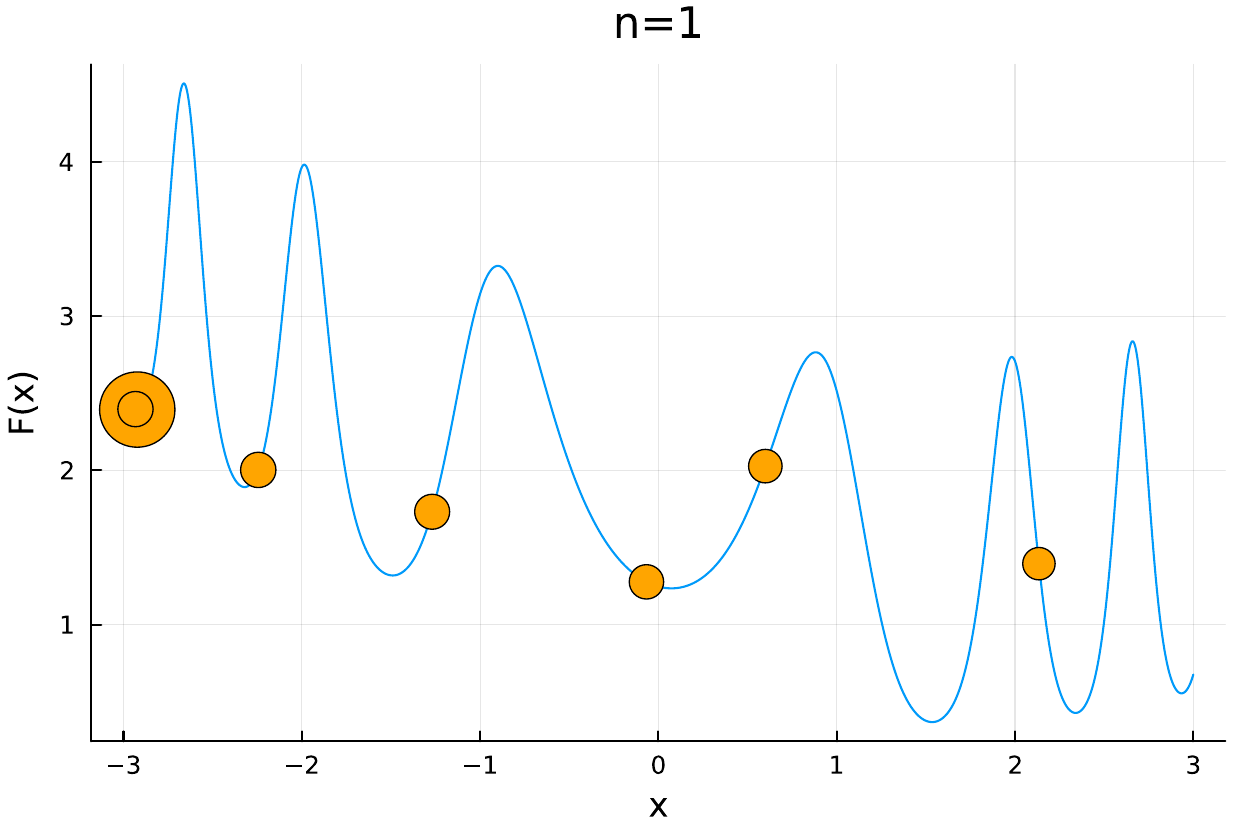}

\includegraphics[scale=0.3]{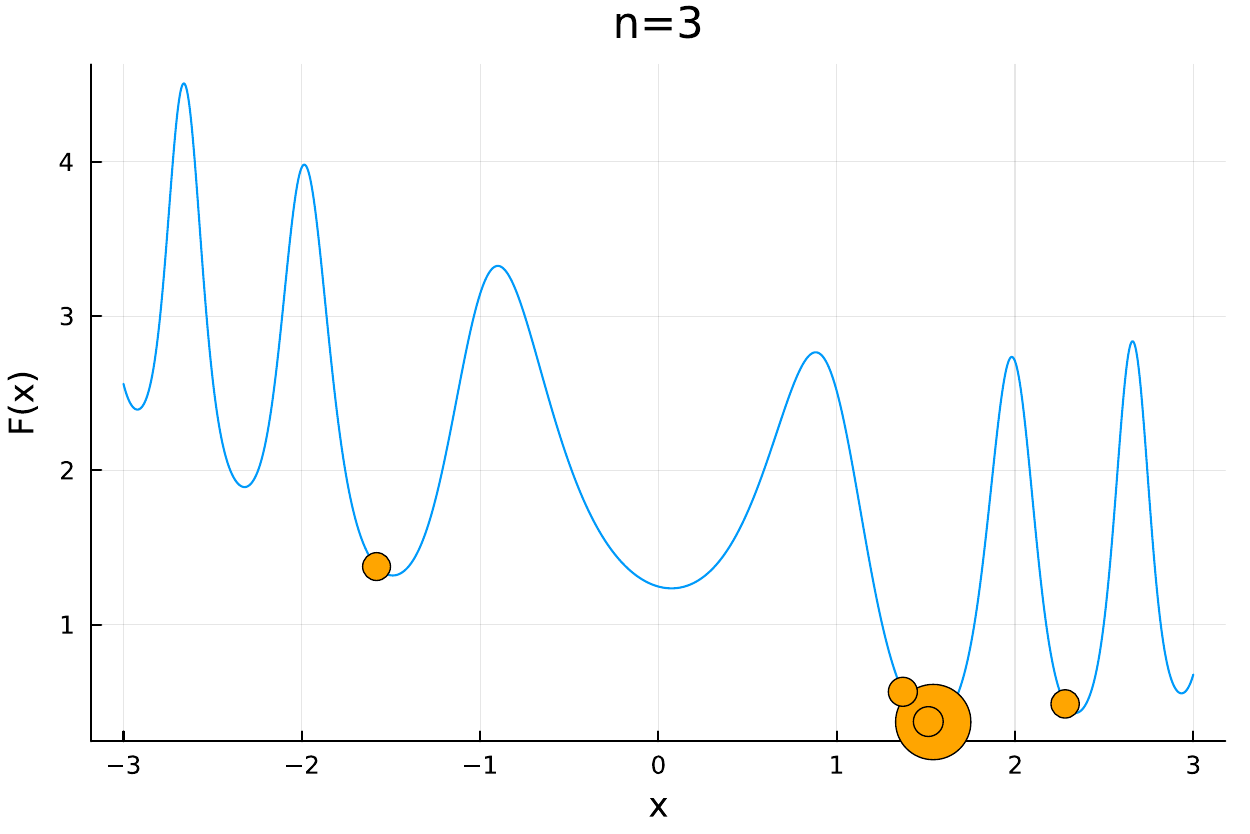}
\includegraphics[scale=0.3]{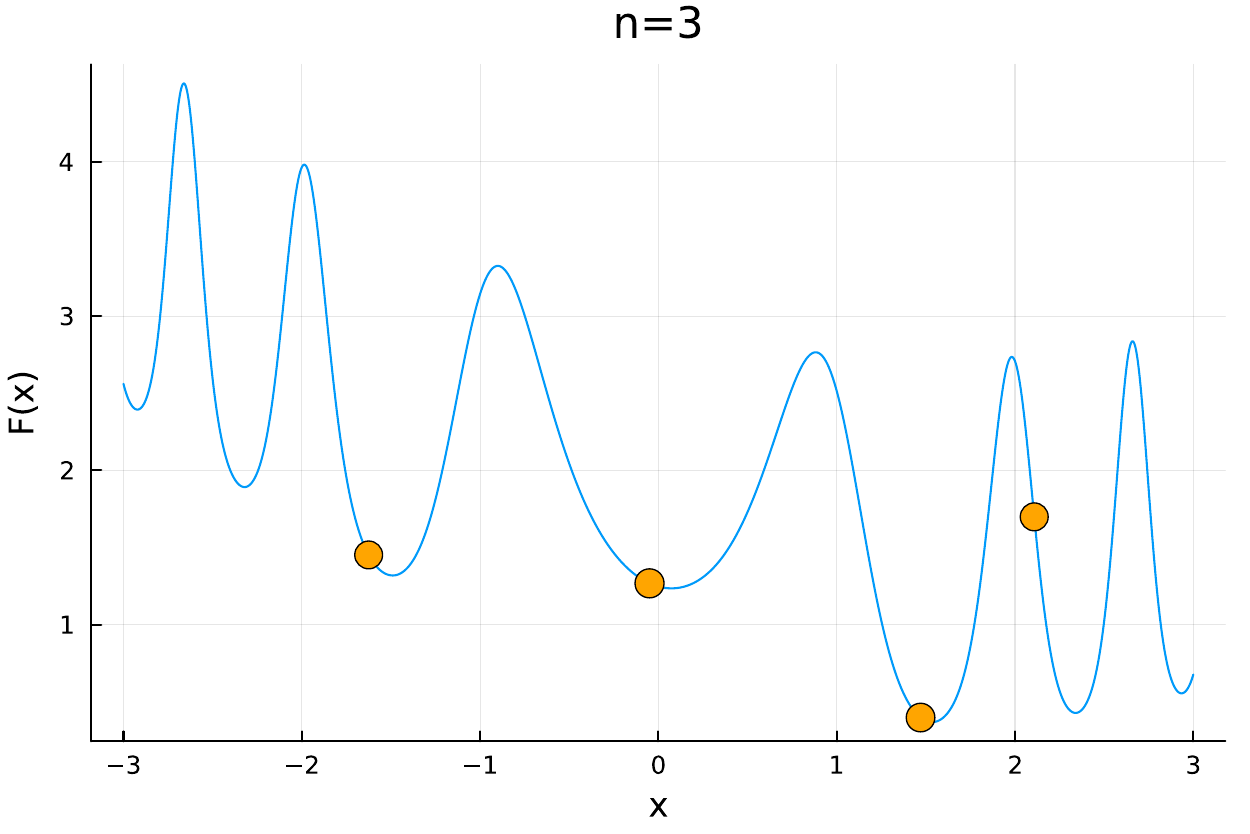}

\includegraphics[scale=0.3]{plot22.pdf}
\includegraphics[scale=0.3]{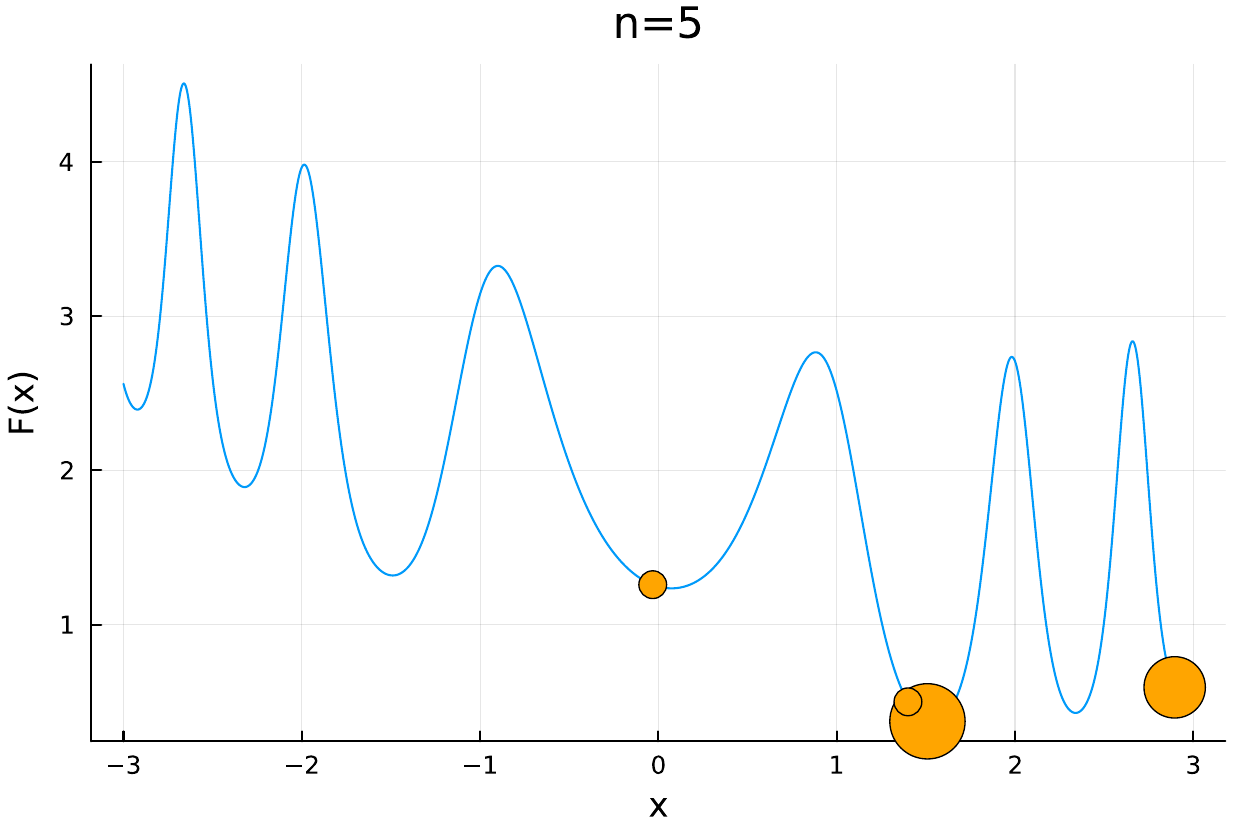}
\caption{
$SBGD_{1,1}$ on the left and $SBGD_{1,2}$ on the right with left sided starting positions.
}
\label{fig:6}
\end{figure}

\clearpage
\section{Implementation}
In the following we consider the  SBGD method for a one-dimensional function in Julia. The parameters will be set as default, thus $p=q=1$. First, we review the basic version of pseudocode as shown in algorithm \ref{alg:backtracking} and \ref{alg:SBGD}, before I introduce my implemented version.
Afterwards I discuss the usage of tolerance factors.

\subsection{Pseudocode}
\label{pseudocode}
We start with the \emph{backtracking line search}, since we use it for the time step protocol.
Therefore we set all parameters first and initialize a time step $h=h_0$ afterwards. In Lemma \ref{lemma1} is shown that we can use
\begin{align*}
	h_0 = \frac{2}{L}(1-\lambda\psi_q(\tilde{m}_i^{m+1})).
\end{align*}
Then  we proceed with a \emph{while}-loop to decrease the step length $h$ as shown in section \ref{section 2.3}. \\\\

\begin{algorithm}[H]
\begin{algorithmic}[1]
\State Set parameters $\lambda,\gamma\in(0,1)$
\State Set relative mass $\tilde{m}_i^{n+1} = \frac{m_i^{n+1}}{m_+}$
\State Initialize time step $h=h_0$
\While{$F(\x_i^{n}-h\nabla F(\x_i^n)) > F(\x_i^n)-\lambda h \vert \nabla F(\x_i^n)\vert ^2$}
\State $h \gets h\gamma$
\EndWhile
\State Set $h(\x_i^n,\lambda\psi_q(\tilde{m}_i^{n+1}))\gets h$
\end{algorithmic}
\caption{Backtracking Line Search}
\label{alg:backtracking}
\end{algorithm}

For the main algorithm the SBGD method, we again first set all parameters. Then the agents will be randomly placed and each is given the initial mass.
In each iteration the best placed agent receives masses from the other agents. To update the masses according to \eqref{eg:2.3}, we need to compute the relative heights first. After that we then find the maximum mass from all agents. This is used to compute the relative masses. These are needed to compute the step length
using the \emph{backtracking line search}. After all agents have updated positions, we eliminate the highest placed agent and reduce the number of active agents.

\begin{algorithm}[H]
\begin{algorithmic}[1]
\State Set $p,q>0$
\State Set $J = \text{Number of agents}$
\State Initialize random positions $\x_1^0,,\hdots, \x_J^0$ with a distribution $\rho_0$
\State Initialize masses $m_1^0,\hdots,m_J^0 = \frac{1}{J}$
\State Set best agent $i_0=\argmin\limits_{i=1,\hdots,J}F(\x_i^0)$
\While{$J>2$}
\State Set $F_-^n=F(\x_{i_n}^n),F_+^n=\max\limits_{i=1,\hdots,J}F(\x_i^n)$
\For{$i=1,\hdots,J$ und $i\neq i_n$}
\If{$m_i^n > 0$}
    \State Compute realtive height $\eta_i^n=\frac{F(\x_i^n)-F_{\min}^n}{F_{\max}^n-F_{\min}^n}$
    \State $m_i^{n+1}=m_i^n-\phi_p(\eta_i^n)m_i^n$
\EndIf
\EndFor
\State $m_{i_n}^{n+1} = m_{i_n}^n + \sum\limits_{i\neq i_n} \phi_p(\eta_i^n)m_i^n$
\State Compute $m_+=\max\limits_{i=1,\hdots,J} m_i^{n+1}$
\For{$i=1,\hdots,J$}
\State Compute relative mass $\tilde{m}_i^{n+1} = \frac{m_i^{n+1}}{m_+}$
\State Compute step length using backtracking $h=h(\x_i^n,\lambda\psi_q(\tilde{m}_i^{n+1}))$
\State $\x_i^{n+1}= \x_i^n -h\nabla F(\x_i^n)$
\EndFor
\State Eliminate worst agent $i_+ = \argmax\limits_{i=1,\hdots,J}F(\x_i^n)$
\State $J \gets J-1$
\EndWhile
\end{algorithmic}
\caption{Swarm-based gradient descent}
\label{alg:SBGD}
\end{algorithm}

\subsection{Implementation in Julia}
For my implementation I used the Julia Version 1.7.1. and I required only the package \emph{Random} to place the agents randomly.
To have a better overview, I divided the program in several functions, which I will explain below. \\

The main function \emph{SBGD}() (see listing \ref{lst:SBGD}) uses help functions to compute the global minimum with the SBGD algorithm.
The first step is to generate the agents. Therefore I created the function \emph{generateAgents}() (listing \ref{lst:generateAgents}). This function
needs the parameters $a,b$ as interval borders and $J$ for the number of agents. The agents are generated as $J\times 2$ array. The first column contains
the positions of the agents and the second column contains the masses.\\

\begin{lstlisting}[caption={Help function \emph{generateAgents}().},captionpos=b,label={lst:generateAgents}]
function generateAgents(J,a,b)
		agents = zeros(J,2)
		agents[1:J,2] .= 1/J
		positions = rand!(zeros(J),a:0.1:b)
		agents[1:J,1] = positions
		return agents
end
\end{lstlisting}

Next, I set the counter for active agents $N$ and $index$, which will hold the index of the heaviest agent. After that, I start the iterations with a \emph{while}-loop. In each iteration I begin with computing the best and worst placed agent. Therefore I am using two help functions \emph{searchMax}() (listing \ref{lst:searchMax}) and analogous to this \emph{searchMin}(). Julia already has functions for searching the maximum and minimum value from an array, but we can not use them because
we need to find the maximum and minimum from the still active agents. Hence, in \emph{searchMax}() and \emph{searchMin}() I check the condition if the mass is
not equal to zero. \\

\begin{lstlisting}[caption={Help function \emph{searchMax}().},captionpos=b,label={lst:searchMax}]
function searchMax(agents,J,F)
    max = 0
    index = 0
    for i=1:J
        if(agents[i,2]!=0)
            max = F(agents[i,1])
            index = i
            break
        end
    end
    for i=index+1:J
        if(agents[i,2]!=0 && F(agents[i,1])>max)
            max = F(agents[i,1])
            index = i
        end
    end
    return index
end
\end{lstlisting}

After that I use the obtained maximum and minimum position of the swarm to calculate the relative heights, which are used to compute the mass transitions.
The variable $sum$ holds the shedded mass from the agents, which the best placed agent receives after completing the \emph{for}-loop. \\\\
Before continuing with the updating of the positions, I eliminate the worst placed agent. After that I use the $findmax()$ function, which Julia provides,
to find the heaviest agent. This agents mass is used to calculate the relative masses, which are passed to the \emph{backtracking}() function (listing \ref{lst:backtracking}).
This function works as in algorithm \ref{pseudocode} already explained.\\\\

\begin{lstlisting}[caption={Help function \emph{backtracking}().},captionpos=b,label={lst:backtracking}]
function backtracking(lambda,gamma,q,mTilde,F,nablaF,x,L)
    mass = psi(q,mTilde)
    h = 2/L *(1-lambda*mass)
    while(F(x-h*nablaF(x))>F(x)-lambda*mass*h*abs(nablaF(x))^2)
        h = gamma * h
    end
    return h
end
\end{lstlisting}

This process is repeated until one agent remains, of which the position will be returned.
\newpage

\begin{lstlisting}[caption={Main function \emph{SBGD}().},captionpos=b,label={lst:SBGD}]
function SBGD(p,q,F,nablaF,J,a,b,lambda,gamma,L)
    agents = generateAgents(J,a,b)
    N = J
    index = 0
    while(N >= 2)
        max_index = searchMax(agents,J,F)
        fMax = F(agents[max_index,1])
        min_index = searchMin(agents,J,F)
        fMin = F(agents[min_index,1])

        sum = 0
        for i = 1:J
            if(i != min_index && agents[i,2]!=0)
                eta = (F(agents[i,1]) - fMin)/(fMax-fMin)
                change = phi(p,eta)*agents[i,2]
                sum += change
                agents[i,2] = agents[i,2]-change
            end
        end
        agents[min_index,2] = agents[min_index,2]+sum
        agents[max_index,2] = 0
        N = N-1

        maxMass = findmax(agents[1:J,2])[1]
        index = findmax(agents[1:J,2])[2]
        for i=1:J
            if(agents[i,2]>0)
                mTilde = agents[i,2]/maxMass
                h = backtracking(lambda,gamma,q,mTilde,F,nablaF,agents[i,1],L)
                agents[i,1] = agents[i,1] - h*nablaF(agents[i,1])
            end
        end
    end

    return agents[index,1]
end
\end{lstlisting}

\subsection{Usage of tolerance factors}

In section 3 of \cite{tad} three tolerance factors are introduced: \emph{tolm}, \emph{tolmerge} and \emph{tolres}.
These are used for a more optimized version of implementation. Because in the basic version shown before, one agent at a time will be eliminated, this
could take a while if we use a large number of agents. To avoid this, the usage of the tolerance factors is recommended.
\\\\
First, instead of eliminating one agent in each iteration, we eliminate all agents, whose masses are below our \emph{tolm} value. Because these agents are such lightly
weighted, we do not expect them to improve the global swarm position. \\
The second tolerance factor \emph{tolmerge} is used to determine if two agents are too close to each other. If they are too close to each other, instead of continuing with both, we merge them. This is because, even if we continue with both, only one might go further and gain mass in the next iterations and the other one will be eliminated at some point. Therefore to reduce computation time, we merge them. \\
Lastly, we use \emph{tolres} to determine, if we can stop the computation already. Instead of waiting until only one agent remains, we can
compute the residual between the best agent of the current and the last iteration. If the difference is small enough, we can stop the computation.

\begin{algorithm}[H]
\begin{algorithmic}[1]
\State Set $p,q>0$
\State Set $J = \text{Number of agents}$
\State Initialize random positions $\x_1^0,,\hdots, \x_J^0$ with a distribution $\rho_0$
\State Initialize masses $m_1^0,\hdots,m_J^0 = \frac{1}{J}$
\State Set best agent $i_0=\argmin\limits_{i=1,\hdots,J}F(\x_i^0)$
\While{$J>2$}
\State Set $F_-^n=F(\x_{i_n}^n),F_+^n=\max\limits_{i=1,\hdots,J}F(\x_i^n)$
\For{$i=1,\hdots,J$ und $i\neq i_n$}
\If{$m_i^n < \frac{1}{N}*\emph{tolm}$}
\State Set $m_i^{n+1}=0$
\State $J \gets J-1$
\Else
    \State Compute realtive height $\eta_i^n=\frac{F(\x_i^n)-F_{\min}^n}{F_{\max}^n-F_{\min}^n}$
    \State $m_i^{n+1}=m_i^n-\phi_p(\eta_i^n)m_i^n$
\EndIf
\EndFor
\State $m_{i_n}^{n+1} = m_{i_n}^n + \sum\limits_{i\neq i_n} \phi_p(\eta_i^n)m_i^n$
\State Compute $m_+=\max\limits_{i=1,\hdots,J} m_i^{n+1}$
\For{$i=1,\hdots,J$}
\State Compute relative mass $\tilde{m}_i^{n+1} = \frac{m_i^{n+1}}{m_+}$
\State Compute step length using backtracking $h=h(\x_i^n,\lambda\psi_q(\tilde{m}_i^{n+1}))$
\State $\x_i^{n+1}= \x_i^n -h\nabla F(\x_i^n)$
\EndFor
\State Merge the agents if their distance < \emph{tolmerge}
\State Set new best agent $i_{n+1}=\argmin\limits_{i=1,\hdots,J}F(\x_i^{n+1})$
\State Compute residual $res = \lvert \x_{i_{n+1}}^{n+1}-\x_{i_n}^n\rvert$
\If{$res<tolres$}
\State $\x_{SOL} \gets \x_{i_{n+1}}^{n+1}$
\State break
\EndIf
\EndWhile
\end{algorithmic}
\caption{Swarm-based gradient descent with tolerance factors}
\label{alg:tol}
\end{algorithm}

For the example from section \ref{section3} I used the thresholds mentioned in \cite{tad}. These are
\begin{align}
    tolm = 10^{-4}, \hspace{1cm} tolmerge = 10^{-3},\hspace{1cm} tolres = 10^{-4}.
\end{align}
In table \ref{tbl:tabelle} is shown, how the SBGD method performed with different numbers of agents.
The first case is, when all agents are equidistant distributed over the whole interval. We notice that the number of iterations is significantly smaller, than
the number of agents. Moreover, the more agents we use, the more precise the solution becomes. However the difference between 20, 50 and 100 agents is relatively
small. For 1000 agents the SBGD method astonishingly only needs one iteration and returns the best solution. Compared to that, it might not be useful to use
the tolerance factors for less than 20 agents. As seen in section \ref{section3} the agent, which converges to the global minimum in the end, first loses a lot of its mass to the heaviest agent, before it gains it back. Therefore the factor \emph{tolm} might lead to an early elimination of the minimizer. \\
If we compare all this to the worst case scenario, where all agents are initialized on the left end of the interval, we notice a slightly worse behavior of the SBGD method. Although there are still many fewer iterations used, the results are different. The tolerance factors seem to not worsen the case with 10 agents, like before. On the contrary, it seems to be slightly better using 10 agents than 50 or 100.

\begin{table}
    \centering
    \begin{tabular}{c|c|c}
        \# Agents  & $\vert \x^*-\x_{SOL}\vert$ & \# Iterations \\
        \hline\hline
        10                         & 0.7768             & 1                             \\
        20                         & 0.0021             & 4                             \\
        50                         & 0.0026             & 2                             \\
        100                        & 0.0014             & 2                             \\
        1000                       & 0.0002             & 1                             \\
    \end{tabular}
    \quad
    \begin{tabular}{c|c|c}
        \# Agents  & $\vert \x^*-\x_{SOL}\vert$ & \# Iterations \\
        \hline\hline
        10                         & 0.0136             & 2                             \\
        20                         & 0.0050             & 2                             \\
        50                         & 0.0282             & 3                             \\
        100                        & 0.0175             & 3                             \\
        1000                       & 0.0020             & 1                             \\
    \end{tabular}
    \caption{Equidistant starting positions (left) and leftsided starting positions (right). $\x^* \approx 1.5355$.}
    \label{tbl:tabelle}
\end{table}
    We can conclude, that if we use the thresholds, the computation is considerably faster. However the results depend on where the agents might be placed and the number of agents used. Moreover we saw that depending on the case, more or less agents should be used to require a fast and precise solution.

\section{Convergence and error analysis}

In this chapter we consider the $SBGD_{p,q}$ iterations \eqref{eg:2.3} using the \emph{backtracking line search} for determining the step size $h_i^n=h(\x_i^n,\lambda\psi_q(\tilde{m}_i^{n+1}))$ with shrinkage factor $\gamma \in (0,1)$.
To simplify matters, we again assume $p=q=1$.
Moreover we assume that for all agents there exists a bounded region $\Omega$. We do not have apriori a bound on $\Omega$, because lighter agents
are allowed to explore the ambient space with larger step sizes. Hence the footprint of the agents $conv_i\{\x_i^n\}$ may expand beyond its initial
convex hull $conv_i\{\x_i^0\}$.

\subsection{Lower bound on step size}

Consider the class of loss functions $F\in\mathcal{C}^2(\Omega)$ with Lipschitz-bound
\begin{align}
    \label{eg:lipbound}
	\lvert \nabla F(\x)-\nabla F(\y)\rvert \leq L\lvert \x - \y \rvert,\hspace{1cm} \forall \x ,\y \in \Omega.
\end{align}

\begin{lemma}
  \label{lemma1}
The lower bound on the step length is
\begin{align}
	h_i^n\geq\frac{2}{L}\gamma(1-\lambda\psi_q(\tilde{m}_i^{m+1})).
\end{align}
\end{lemma}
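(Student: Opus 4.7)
The plan is to combine the Lipschitz bound \eqref{eg:lipbound} with the mechanics of the backtracking loop. The key intermediate fact is that, once the candidate step satisfies $h\leq \frac{2}{L}(1-\lambda\psi_q(\tilde{m}_i^{n+1}))$, the Armijo-type condition \eqref{eg:2.1} automatically holds; backtracking can therefore only shrink $h$ by at most one extra factor of $\gamma$ below this threshold before stopping.

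First I would derive the standard descent inequality from \eqref{eg:lipbound}. Writing $\x^{n+1}(h)=\x^n - h\nabla F(\x^n)$ and applying the fundamental theorem of calculus along the segment from $\x^n$ to $\x^{n+1}(h)$, the Lipschitz bound on $\nabla F$ gives
\begin{align*}
F(\x^{n+1}(h)) \;\leq\; F(\x^n) \;-\; h\lvert\nabla F(\x^n)\rvert^2 \;+\; \tfrac{L}{2}h^2\lvert\nabla F(\x^n)\rvert^2 .
\end{align*}
Denoting the effective Armijo parameter by $\mu := \lambda\psi_q(\tilde{m}_i^{n+1})\in(0,1)$, I would compare the right-hand side to the target $F(\x^n)-\mu h\lvert\nabla F(\x^n)\rvert^2$ from \eqref{eg:2.1}. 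A short rearrangement shows that \eqref{eg:2.1} is satisfied as soon as $1-\tfrac{Lh}{2}\geq \mu$, i.e.\ whenever
\begin{align*}
h \;\leq\; h^\star \;:=\; \tfrac{2}{L}\bigl(1-\mu\bigr) \;=\; \tfrac{2}{L}\bigl(1-\lambda\psi_q(\tilde{m}_i^{n+1})\bigr).
\end{align*}

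Next I would feed this into the backtracking loop of Algorithm~\ref{alg:backtracking}, which begins with $h_0 = h^\star$ and multiplies by $\gamma$ until \eqref{eg:2.1} is met. There are two cases: either the loop terminates at the very first step with $h_i^n = h_0 = h^\star$, in which case $h_i^n \geq \gamma h^\star$ trivially; or it performs at least one shrinkage, producing an accepted value $h_i^n$ whose predecessor $h_i^n/\gamma$ failed \eqref{eg:2.1}. By the previous paragraph, failure of \eqref{eg:2.1} forces $h_i^n/\gamma > h^\star$, so that $h_i^n > \gamma h^\star$. In both cases
\begin{align*}
h_i^n \;\geq\; \gamma h^\star \;=\; \tfrac{2}{L}\gamma\bigl(1-\lambda\psi_q(\tilde{m}_i^{n+1})\bigr),
\end{align*}
which is the claim.

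There is no real obstacle here: the only subtlety is being explicit about the case split between immediate acceptance and at least one shrinkage, since the bound $h^\star$ is only guaranteed for the \emph{previous} (rejected) iterate inside the loop. Everything else reduces to the quadratic upper bound implied by the Lipschitz condition \eqref{eg:lipbound}, and no additional hypotheses on $F$ beyond those stated are required.
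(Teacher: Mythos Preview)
Your proof is correct and follows essentially the same route as the paper: derive the quadratic descent bound from the Lipschitz gradient, identify the threshold $h^\star=\tfrac{2}{L}(1-\lambda\psi_q(\tilde m_i^{n+1}))$ below which the Armijo condition is guaranteed, and then observe that backtracking can overshoot this threshold by at most one factor of $\gamma$. Your explicit case split between immediate acceptance and at least one shrinkage is in fact slightly more careful than the paper's version, which tacitly assumes $h_i^n/\gamma$ was actually tested and rejected.
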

\begin{proof}
	By Taylor's theorem and the Lipschitz continuity of $\nabla F$
	\begin{align*}
		F(\x_i^n - h_i^n\nabla F(\x_i^n)) &= F(\x_i^n)- h_i^n\lvert\nabla F(\x_i^n)\rvert^2 + \frac{1}{2}(h_i^n)^2\lvert\nabla F(\x_i^n)\rvert^2 H_F(\x_i^n)\\
		&\leq F(\x_i^n)- h_i^n\lvert\nabla F(\x_i^n)\rvert^2 + \frac{L}{2}(h_i^n)^2\lvert\nabla F(\x_i^n)\rvert^2\\
		&=F(\x_i^n)- (1-\frac{L}{2}h_i^n)h_i^n\lvert\nabla F(\x_i^n)\rvert^2.
	\end{align*}
	Hence, if $h_i^n\leq\frac{2}{L}(1-\lambda\psi_q(\tilde{m}_i^{m+1}))$,
	\begin{align}
        \label{eg:5.1.3}
		F(\x_i^n - h_i^n\nabla F(\x_i^n))\leq F(\x_i^n)- \lambda\psi_q(\tilde{m}_i^{m+1})h_i^n\lvert\nabla F(\x_i^n)\rvert^2.
	\end{align}
	By the \emph{backtracking line search} iterations, the inequality holds for $h_i^n$ but not for $\frac{h_i^n}{\gamma}$. \\
	Therefore
	\begin{align*}
		\frac{h_i^n}{\gamma}\geq\frac{2}{L}(1-\lambda\psi_q(\tilde{m}_i^{m+1})).
	\end{align*}
\end{proof}

\subsection{Convergence to a band of local minima}
Next we discuss the convergence of the SBGD method, which is determined by the time series of SBGD minimizers,
\begin{align*}
  \X^n_-= \x_{i_n}^n, \hspace{1cm} i_n:=\argmin\limits_{i\in J} F(\x_i^n),
\end{align*}
and the time series of its heaviest agents,
\begin{align*}
  \X^n_+= \x_{j_n}^n, \hspace{1cm} j_n:=\argmax\limits_{i\in J} m_i^n.
\end{align*}
The communication of masses leads to a shift of mass from higher ground to the minimizers. When the minimizer attracts enough mass to gain the role of the
heaviest agent, the two sequences coincide.
Furthermore, since $F(\x_i^n)$ are decreasing, we conclude that $\forall n,i$ the SBGD iterations remain in a range
\begin{align}
  \max\limits_{j\in J} F(\x_j^n)-F(\x_i^n)\leq M, \hspace{1cm} M:=\max\limits_{i\in J} F(\x_i^0)-F(\x^*).
\end{align}

\begin{proposition}
    \label{prop}
  Let $\{\X^n_-\}_{n\geq 0}$ and $\{\X^n_+\}_{n\geq 0}$ denote the time sequence of SBGD minimizers and heaviest agents at $t^n$. Then
  there exists a constant $C=C(\gamma,L,M,\lambda)$, such that
  \begin{align}
      \label{eg:5.2.2}
    \sum\limits_{n=0}^\infty \min\{\lvert\nabla F(\X^n_+)\rvert,\lvert\nabla F(\X^n_-)\rvert,\lvert\nabla F(\X^n_+)\rvert\cdot\lvert\nabla F(\X^n_-)\rvert\}^2
    < C \min\limits_{i\in J} F(\x_i^0).
  \end{align}
\end{proposition}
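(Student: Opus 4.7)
The plan is to combine Lemma~\ref{lemma1} with the Wolfe-type inequality~\eqref{eg:5.1.3} into the refined per-agent descent
$$F(\x_i^n) - F(\x_i^{n+1}) \geq \tfrac{2\gamma\lambda}{L}\,\psi_q(\tilde{m}_i^{n+1})\bigl(1-\lambda\psi_q(\tilde{m}_i^{n+1})\bigr)\lvert\nabla F(\x_i^n)\rvert^2,$$
which I will use as the workhorse of the proof. Applying this to the current minimizer $i_n$ produces control on $\lvert\nabla F(\X_-^n)\rvert$, while applying it to the current heaviest agent $j_n$ produces control on $\lvert\nabla F(\X_+^n)\rvert$. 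Summability over $n$ will then be extracted by telescoping an $F$-value sequence that is bounded below.

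First I would observe that $n\mapsto F(\X_-^n)$ is monotonically non-increasing, because every per-agent update reduces $F$ and taking the swarm-wide minimum preserves this ordering. Normalising so that $F(\x^*)=0$ (the shift is harmlessly absorbed into the constant $C$), this gives
$$\sum_{n=0}^{\infty} \bigl(F(\X_-^n)-F(\X_-^{n+1})\bigr)\;\leq\;F(\X_-^0)\;=\;\min_{i\in J} F(\x_i^0).$$
Because $F(\X_-^{n+1})\leq F(\x_{i_n}^{n+1})$, each telescoping increment dominates the one-step descent of the minimizer; the same trick with $j_n$ in place of $i_n$ exposes the heaviest agent's descent, up to a slack of size $F(\X_+^n)-F(\X_-^n)$ that vanishes whenever $i_n=j_n$.

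The three gradient quantities inside the minimum of \eqref{eg:5.2.2} will emerge from a case analysis on the mass-redistribution dynamics. If $i_n$ becomes the new heaviest agent, then $\tilde{m}_{i_n}^{n+1}=1$ and the descent is at least $\tfrac{2\gamma\lambda(1-\lambda)}{L}\lvert\nabla F(\X_-^n)\rvert^2$; symmetrically, if the heaviest agent is unchanged, one obtains the analogous bound in $\lvert\nabla F(\X_+^n)\rvert^2$. In the transitional regime, where $i_n\neq j_n$ and neither has yet captured the bulk of the mass, $\tilde{m}_{i_n}^{n+1}$ is small and must itself be estimated from below. This is the main obstacle I anticipate: using $\phi_p(\eta_{j_n}^n)\geq c\,\eta_{j_n}^n$ together with the Lipschitz/Taylor bound $F(\X_+^n)-F(\X_-^n)\lesssim \lvert\nabla F(\X_+^n)\rvert\cdot\mathrm{diam}(\Omega)$, the relative height can be translated into an extra factor of $\lvert\nabla F(\X_+^n)\rvert$ multiplying the $\lvert\nabla F(\X_-^n)\rvert^2$ bound obtained from the minimizer's descent, which is precisely the origin of the product term $\lvert\nabla F(\X_+^n)\rvert\cdot\lvert\nabla F(\X_-^n)\rvert$ in the min. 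Assembling the three regime-dependent lower bounds on $F(\X_-^n)-F(\X_-^{n+1})$, taking their worst-case prefactor, and summing over $n$ then delivers \eqref{eg:5.2.2} with a single constant $C=C(\gamma,L,M,\lambda)$.
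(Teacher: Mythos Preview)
Your overall architecture (per-agent descent inequality, telescoping of $F(\X_-^n)$, and a case split according to whether the minimizer already carries the largest mass) matches the paper's proof. The canonical case $\tilde m_{i_n}^{n+1}=1$ is handled exactly as in the paper. The difficulty is entirely in the non-canonical case $i_n\neq j_n$, and there your sketch contains a real gap.

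In that regime you propose to bound $\tilde m_{i_n}^{n+1}$ from below via $\phi_p(\eta_{j_n}^n)\ge c\,\eta_{j_n}^n$ and then invoke a ``Lipschitz/Taylor bound $F(\X_+^n)-F(\X_-^n)\lesssim \lvert\nabla F(\X_+^n)\rvert\cdot\mathrm{diam}(\Omega)$'' to convert the height difference into a gradient factor. This is the wrong direction: since $\tilde m_{i_n}^{n+1}>\tfrac{1}{M}\bigl(F(\X_+^n)-F(\X_-^n)\bigr)$, you need a \emph{lower} bound on the height difference in terms of $\lvert\nabla F(\X_+^n)\rvert$, not an upper bound. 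No such lower bound is available in general (the gap can be tiny even when both gradients are large). Relatedly, your treatment of the ``heaviest agent unchanged'' case waves away the slack $F(\X_+^n)-F(\X_-^n)$; since here $i_n\neq j_n$, that slack does not vanish and the descent bound on $\X_+^n$ alone does not telescope against $F(\X_-^n)$.

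The paper resolves both issues simultaneously by a dichotomy on the size of the height gap. If $F(\X_+^n)-F(\X_-^n)\le \tfrac{\gamma}{L}(1-\lambda)\lambda\lvert\nabla F(\X_+^n)\rvert^2$, then the heaviest agent's own descent (which equals $\tfrac{2\gamma}{L}(1-\lambda)\lambda\lvert\nabla F(\X_+^n)\rvert^2$) is large enough to absorb the slack and still yield $F(\X_-^{n+1})\le F(\X_-^n)-\tfrac{\gamma}{L}(1-\lambda)\lambda\lvert\nabla F(\X_+^n)\rvert^2$. In the complementary case the height gap is \emph{large}, and feeding that lower bound into $\tilde m_{i_n}^{n+1}>\tfrac{1}{M}(F(\X_+^n)-F(\X_-^n))$ gives $\tilde m_{i_n}^{n+1}\gtrsim \lvert\nabla F(\X_+^n)\rvert^2$; plugging this into the minimizer's descent produces the product term $\lvert\nabla F(\X_+^n)\rvert^2\lvert\nabla F(\X_-^n)\rvert^2$. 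Replacing your Taylor estimate by this two-sided split on $F(\X_+^n)-F(\X_-^n)$ is the missing ingredient.
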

\begin{proof}
    By Lemma \eqref{lemma1} the step length $h_i^n\geq\frac{2\gamma}{L}(1-\lambda\psi_q(\tilde{m}_i^{m+1}))$ and the descent property \eqref{eg:5.1.3} implies
    \begin{align}
        \label{eg:descentproperty}
        \begin{split}
        F(\x_i^{n+1}) &\leq F(\x_i^n)-\lambda \psi_q(\tilde{m}_i^{m+1})h_i^n \vert \nabla F(\x_i^n)\vert ^2 \\
        &\leq F(\x_i^n)-\frac{2\gamma}{L}(1-\lambda\psi_q(\tilde{m}_i^{m+1}))\lambda \psi_q(\tilde{m}_i^{m+1})  \vert \nabla F(\x_i^n)\vert ^2 \\
        &\stackrel{q=1}{=}F(\x_i^n)-\frac{2\gamma}{L}(1-\lambda\tilde{m}_i^{m+1})\lambda \tilde{m}_i^{m+1}  \vert \nabla F(\x_i^n)\vert ^2.
    \end{split}
    \end{align}
    Since the descent bound applies for all agents, we consider that bound for the minimizer $i_n=\argmin\limits_{i\in J} F(\x_i^n)$.
    For this purpose we need to distinguish two scenarios: \\\\
    The first scenario is the canonical scenario, in which the minimizer coincides with the heaviest agent, $m_{i_n}^{n+1}=m_{+}^{n+1}$ and thus $\tilde{m}_{i_n}^{n+1}=1$. Hence, we conclude $h_{i_n}^n\geq\frac{2\gamma}{L}(1-\lambda)$ and because $\X_-^{n+1}=\x_{i_{n+1}}^{n+1}$ is
    the global minimizer at time $t^{n+1}$,
    \begin{align}
        \label{eg:5.2.4}
        F(\X_-^{n+1}) \leq F(\x_{i_n}^{n+1}) \leq F(\X_-^n)-\frac{2\gamma}{L}(1-\lambda)\lambda \vert \nabla F(\X_-^n)\vert ^2
    \end{align}
    holds. The second scenario takes place, when the mass of the minimizer is $m_{i_n}^{n+1}<m_{j_n}^{n+1}$. That means the mass of the minimizer is not greater yet, than the mass of the heaviest agent from the iteration before positioned at $\x_{j_n}^n$. So even after losing a portion of its mass,
    \begin{align*}
        & m_{+}^{n+1} = m_{+}^n-\eta_{j_n}^n m_{+}^n \\
        \Leftrightarrow \hspace{0.5cm} & m_{+}^{n+1}\eta_{j_n}^n = m_{+}^n\eta_{+}^n-(\eta_{+}^n)^2 m_{+}^n \\
        \Leftrightarrow \hspace{0.5cm} & \eta_{+}^n m_{+}^n = \frac{\eta_{+}^n}{(1-\eta_{+}^n)}m_{+}^{n+1}
    \end{align*}
    the heaviest agent from before is still heavier than the minimizer at $\x_{i_n}^{n+1}$. Because the minimizer gained the mass lost by the heaviest agent,
    for the relative mass of the minimizer applies
    \begin{align*}
        m_{i_n}^{n+1}>\eta_{+}^n m_{+}^n = \frac{\eta_{+}^n}{(1-\eta_{+}^n)}m_{+}^{n+1},
    \end{align*}
    and thus with \eqref{eg:relativeheights} we conclude
    \begin{align*}
        \tilde{m}_{i_n}^{n+1} &> \frac{\eta_{+}^n}{(1-\eta_{+}^n)} \\
        & = \frac{F(\x_{+}^n)-F(\x_{i_n}^n)}{\max\limits_{j\in J}F(\x_j^n)-F(\x_{+}^n)} \\
        & > \frac{1}{M}(F(\X_+^n)-F(\X_-^n)).
    \end{align*}
    Depending on $F(\X_+^n)-F(\X_-^n)$, there are two subcases of this scenario: \\
    First we assume $F(\X_+^n)-F(\X_-^n)\leq \frac{\gamma}{L}(1-\lambda)\lambda \vert \nabla F(\X_+^n)\vert ^2$.
    With that and the descent property \eqref{eg:descentproperty} for the heaviest agent at $\x_+^n$, where $\tilde{m}_{i_n}^{n+1}\to\tilde{m}_{+}^{n+1} = 1$, we find
    \begin{align}
        \label{eg:5.2.5}
        \begin{split}
            F(\X_-^{n+1})&\leq F(\X_+^{n+1}) \leq F(\X_+^n)-\frac{2\gamma}{L}(1-\lambda)\lambda \vert \nabla F(\X_+^n)\vert ^2 \\
            & \leq F(\X_-^n) - \frac{\gamma}{L}(1-\lambda)\lambda \vert \nabla F(\X_+^n)\vert ^2 .
        \end{split}
    \end{align}

    We remain with the worst case scenario $F(\X_+^n)-F(\X_-^n)\geq \frac{\gamma}{L}(1-\lambda)\lambda \vert \nabla F(\X_+^n)\vert ^2$. In this case,
    we have a large difference of heights between the minimizer and the heaviest agent. This implies
    \begin{align*}
        \tilde{m}_{i_n}^{n+1}> \frac{1}{M}(F(\X_+^n)-F(\X_-^n)) \geq \frac{\gamma}{ML}(1-\lambda)\lambda \vert \nabla F(\X_+^n)\vert ^2.
    \end{align*}
    Without loss of generality \cite[S. 15]{tad}, we can assume $(1-\lambda\tilde{m}_{i_n}^{n+1})>\frac{1}{2}$. Together with the secured lower bound for $\tilde{m}_{i_n}^{n+1}$
    and the descent property \eqref{eg:descentproperty} we conclude
    \begin{align}
        \label{eg:5.2.6}
        \begin{split}
            F(\X_-^{n+1})&\leq F(\x_{i_n}^{n+1}) \leq F(\x_{i_n}^{n}) - \frac{2\gamma}{L}(1-\lambda\tilde{m}_{i_n}^{m+1})\lambda \tilde{m}_{i_n}^{m+1}  \vert \nabla F(\x_i^n)\vert ^2 \\
            & \leq F(\x_{i_n}^{n}) - \frac{\gamma}{L}\lambda \tilde{m}_{i_n}^{m+1}  \vert \nabla F(\x_i^n)\vert ^2 \\
            &\leq F(\X_-^n) - \frac{\gamma^2}{ML^2}(1-\lambda)\lambda^2 \vert\nabla F(\X_-^n)\vert ^2 \nabla F(\X_+^n)\vert ^2.
        \end{split}
    \end{align}
    \\\\
    By combining all cases \eqref{eg:5.2.4}, \eqref{eg:5.2.5} and \eqref{eg:5.2.6}, we find
    \begin{align}
        F(\X_-^{n+1})\leq F(\X_-^n) - \frac{1}{C}\min\{\lvert\nabla F(\X^n_+)\rvert,\lvert\nabla F(\X^n_-)\rvert,\lvert\nabla F(\X^n_+)\rvert\cdot\lvert\nabla F(\X^n_-)\rvert\}^2
    \end{align}
    and
    \begin{align}
        C = \max\left\{\frac{L}{\gamma(1-\lambda)\lambda},\frac{ML^2}{\gamma^2(1-\lambda)\lambda^2}\right\}.
    \end{align}
    The bound in \eqref{eg:5.2.2} follows by a telescoping sum.
\end{proof}
We have seen that the summability bound \eqref{eg:5.2.2} depends only on the time sequence of minimizers and heaviest agents, but not on the lighter agents.
For $n$ large enough, both sequences coincide into one sequence $\{\X^n\}$. To prove convergence of SBGD, time sub-sequences $\{\X^{n_\alpha}\}$ need to
satisfy a Palais-Smale condition \cite{str}:
by monotonicity $F(\X^{n_\alpha})\leq \max\limits_{i\in J} F(\x_i^0)$ and $\nabla F(\X^{n_\alpha})\overset{\alpha\to\infty}{\longrightarrow} 0$.

\begin{theorem}
    Consider the loss function $F\in\mathcal{C}^2(\Omega)$ such that the Lipschitz-bound \eqref{eg:lipbound} holds and let $\{\X^n_-\}_{n\geq 0}$ denote the
    time sequence of SBGD minimizers. Then $\{\X^n_-\}_{n\geq 0}$ consists of one or more sub-sequences, $\{\X^{n_\alpha}_-,\alpha = 1,2,\hdots\}$, that converge to a band of local minima with equal heights,
    \begin{align*}
        \X_-^{n_\alpha}\overset{n_\alpha\to\infty}{\longrightarrow} \X_\alpha^*
    \end{align*}
    such that $\nabla F(\X_\alpha^*)=0$ and $F(\X_\alpha^*)=F(\X_\beta^*)$. In particular, if $F$ admits only distinct local minima in $\Omega$,
    then the whole sequence $\X^n$ converges to a minimum.
\end{theorem}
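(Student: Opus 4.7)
My plan is to combine the summability bound in Proposition~\ref{prop}, the monotone descent of $F(\X^n_-)$, and the boundedness of $\Omega$ to deduce convergence along subsequences and then upgrade to full convergence under the distinct-minima hypothesis.

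First I would observe that the proof of Proposition~\ref{prop} in fact gives the per-step monotonicity
\begin{align*}
F(\X^{n+1}_-) \leq F(\X^n_-) - \tfrac{1}{C}\min\bigl\{|\nabla F(\X^n_+)|, |\nabla F(\X^n_-)|, |\nabla F(\X^n_+)||\nabla F(\X^n_-)|\bigr\}^2.
\end{align*}
Since $F$ is bounded below on the bounded footprint, $F(\X^n_-)$ is monotonically decreasing to some finite limit $F_\infty$, and the telescoping argument that produced \eqref{eg:5.2.2} already shows that the minimum inside the sum tends to zero. This is the Palais--Smale information flagged in the excerpt.

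The main obstacle is to promote this into $|\nabla F(\X^n_-)| \to 0$ along an appropriate tail, since the summable quantity is merely the minimum of three terms involving both $\X^n_-$ and $\X^n_+$. Here the mass dynamics must be invoked: once the current minimizer absorbs enough communicated mass to become the heaviest agent, the two index sequences coincide into a single $\{\X^n\}$, the canonical scenario~\eqref{eg:5.2.4} applies, and all three arguments of the minimum collapse to $|\nabla F(\X^n_-)|^2$. I would make this merging precise either by extracting the subsequence of indices at which $m_{i_n}^{n+1} = m_+^{n+1}$, or by arguing that a persistent gap between the minimizer and the heaviest agent would trigger repeated mass-transfer events falling under~\eqref{eg:5.2.5}--\eqref{eg:5.2.6}, whose cumulative descent in $F(\X^n_-)$ is controlled by the same telescoping sum and must therefore cease to contribute; on this merged tail one concludes $|\nabla F(\X^n_-)| \to 0$.

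The rest is standard. By boundedness of $\Omega$ together with Bolzano--Weierstrass, every subsequence of $\{\X^n_-\}$ has a further convergent sub-subsequence $\X^{n_\alpha}_- \to \X^*_\alpha$, and continuity of $\nabla F$ delivers $\nabla F(\X^*_\alpha) = 0$; since $F$ descends monotonically along the sequence of minimizers, each accumulation point $\X^*_\alpha$ is a local minimum. The monotone convergence $F(\X^n_-) \to F_\infty$ then forces $F(\X^*_\alpha) = F_\infty = F(\X^*_\beta)$ for any two limit points, giving the equal-heights band. Finally, if $F$ admits only distinct local minima in $\Omega$, at most one point of $\Omega$ can be a local minimum at level $F_\infty$; the bounded sequence $\{\X^n_-\}$ therefore has a unique accumulation point and must converge to it.
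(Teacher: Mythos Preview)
Your proposal is correct and follows essentially the same route as the paper: boundedness of $\Omega$ plus Bolzano--Weierstrass give convergent subsequences, Proposition~\ref{prop} forces the gradients to vanish along them, and the monotone descent of $F(\X^n_-)$ yields the equal-heights band and the full-sequence convergence under the distinct-minima hypothesis. The one place you are more careful than the paper is in justifying why the summability bound on the minimum of three terms actually implies $|\nabla F(\X^n_-)|\to 0$; the paper simply asserts just before the theorem that for $n$ large the minimizer and heaviest-agent sequences coincide and then invokes the Proposition directly, whereas you sketch the mass-dynamics argument that underlies this merging.
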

\begin{proof}
    Because we assume the sequence $\{\X_-^n\}$ is bounded in $\Omega$, we know it has converging sub-sequences. We take any converging sub-sequences
    $\X_-^{n_\alpha}\to \X_\alpha^* \in \Omega$. Then by the proposition \eqref{prop} before,
    \begin{align*}
        \nabla F(\X^{n_\alpha})\to 0,
    \end{align*}
    and thus $\X_\alpha^*$ are local minimizers with $\nabla F(\X_\alpha^*) = 0$. Since $F(\X_-^n)$ is decreasing, all $F(\X_\alpha^*)$ must have the
    same height. The collection of equi-height minimizers $\{\X_\alpha^*\vert F(\X_\alpha^*)=F(\X_\beta^*)\}$ is the limit-set of $\{\X_-^n\}$.
\end{proof}
\subsection{Flatness and convergence rate}

To quantify convergence rate, we need a classification for the level of flatness our target function has. For this we use the Lojasiewicz condition \cite{toj}: \\
If $F$ is analytic in $\Omega$, then for every critical point of $F$, $\x^*\in\Omega$, exists a neighborhood $\mathcal{N}_*$ surrounding $\x^*\in\mathcal{N}_*$,
an exponent $\beta\in(1,2]$ and a constant $\mu>0$ such that
\begin{align}
    \label{eg:toj}
    \mu\lvert F(\x)-F(\x^*)\rvert\leq\lvert\nabla F(\x)\rvert^\beta, \hspace{1cm} \forall\x\in\mathcal{N}_*.
\end{align}
In case of local convexity, \eqref{eg:toj} is reduced to the Polyack-Lojasiewicz condition \cite{pol}
\begin{align}
    \label{eg:pol}
    \mu ( F(\x)-F(\x^*))\leq\lvert\nabla F(\x)\rvert^2, \hspace{1cm} \forall\x\in\mathcal{N}_*.
\end{align}

For the next theorem we assume that $n$ is large enough, so that we are allowed to discuss only the canonical scenario, where minimizers and heaviest agents coincide.
\begin{theorem}
    Consider the loss function $F\in\mathcal{C}^2$ such that the Lipschitz-bound \eqref{eg:lipbound} holds, with minimal flatness $\beta$.
    Let $\{\X^n_-\}_{n\geq 0}$ denote the time sequence of SBGD minimizers. Then, there exists a constant $C=C(\gamma,\lambda,\mu)$, such that
    \begin{align}
        \label{eg:cases}
        F(\X_-^{n_\alpha})-F(\X_\alpha^*)
        \begin{cases}
            \leq \left(1-\frac{2\mu\gamma\lambda(1-\lambda)}{L}\right)^n (\min\limits_{i\in J} F(\x_i^0)-F(\x^*)), & \beta = 2 \\
            \lesssim C\left(\frac{1}{n_\alpha}\right)^{\frac{\beta}{2-\beta}}, & \beta\in (1,2)
        \end{cases}
    \end{align}
\end{theorem}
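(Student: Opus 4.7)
My approach is to combine the canonical-scenario descent bound from Proposition~\ref{prop} with the Lojasiewicz and Polyak--Lojasiewicz inequalities to obtain a scalar recurrence for the non-negative error $e_n:=F(\X_-^n)-F(\X_\alpha^*)$, and then read off the two claimed rates from that recurrence.

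Since $n$ is assumed large enough that the canonical scenario applies, $\tilde m_{i_n}^{n+1}=1$ and the descent bound \eqref{eg:5.2.4} specialises to
\begin{align*}
e_{n+1}\leq e_n-\frac{2\gamma\lambda(1-\lambda)}{L}\lvert\nabla F(\X_-^n)\rvert^2.
\end{align*}
The previous theorem guarantees that $\X_-^n$ eventually enters the Lojasiewicz neighbourhood $\mathcal N_*$ of $\X_\alpha^*$, so I may lower-bound $\lvert\nabla F(\X_-^n)\rvert^2$ in terms of $e_n$ using \eqref{eg:toj} and substitute into the above.

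In the locally convex case $\beta=2$, the Polyak--Lojasiewicz inequality \eqref{eg:pol} gives $\lvert\nabla F(\X_-^n)\rvert^2\geq\mu e_n$, which plugged back in produces the linear contraction $e_{n+1}\leq\bigl(1-\tfrac{2\mu\gamma\lambda(1-\lambda)}{L}\bigr)e_n$; iterating from $e_0\leq\min_{i\in J} F(\x_i^0)-F(\x^*)$ yields the first line of \eqref{eg:cases}. For $\beta\in(1,2)$ the weaker bound $\lvert\nabla F(\X_-^n)\rvert^2\geq(\mu e_n)^{2/\beta}$ applies, so with $\alpha:=2/\beta>1$ and a constant $K$ depending on $\gamma,\lambda,\mu,L$ the recurrence becomes the super-linear one
\begin{align*}
e_{n+1}\leq e_n-K e_n^{\alpha}.
\end{align*}

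The main obstacle is converting this super-linear recurrence into a polynomial rate. I would apply the mean-value theorem to $g(x)=x^{-(\alpha-1)}$ on $[e_{n+1},e_n]$ (monotonicity $e_{n+1}\leq e_n$ is immediate from the descent bound), using $\xi^{-\alpha}\geq e_n^{-\alpha}$ for $\xi\in[e_{n+1},e_n]$ to deduce $e_{n+1}^{-(\alpha-1)}-e_n^{-(\alpha-1)}\geq(\alpha-1)e_n^{-\alpha}(e_n-e_{n+1})\geq K(\alpha-1)$. A telescoping sum then gives $e_n^{-(\alpha-1)}\geq e_0^{-(\alpha-1)}+K(\alpha-1)n$, whence $e_n\lesssim(K(\alpha-1)n)^{-1/(\alpha-1)}=(K(\alpha-1)n)^{-\beta/(2-\beta)}$, which is the second line of \eqref{eg:cases}; the asserted constant $C(\gamma,\lambda,\mu)$ collects $1/(K(\alpha-1))^{1/(\alpha-1)}$ together with the Lipschitz prefactor absorbed into $K$.
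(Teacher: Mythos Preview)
Your proof is correct and follows essentially the same route as the paper: invoke the canonical descent bound \eqref{eg:5.2.4}, substitute the Polyak--Lojasiewicz or Lojasiewicz inequality to obtain a scalar recurrence for $e_n$, and then read off the geometric rate for $\beta=2$ and the polynomial rate for $\beta\in(1,2)$. The only difference is cosmetic: the paper cites the Riccati-type inequality and quotes its solution, whereas you derive the $n^{-\beta/(2-\beta)}$ rate explicitly via the mean-value argument on $x^{-(\alpha-1)}$, which is in fact the standard proof of that bound.
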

\begin{proof}
    We again start with the descent property \eqref{eg:5.2.4}
    \begin{align*}
        F(\X_-^{n+1})\leq F(\X_-^n) - \lambda h_- \lvert\nabla F(\X_-^n)\rvert^2, \hspace{1cm} h_{i_n}^n \geq h_-:=\frac{2\gamma}{L}(1-\lambda).
    \end{align*}
    Focussing on the converging sub-sequence $\{\X_-^{n_\alpha}\}$, we get
    \begin{align*}
        F(\X_-^{n_\alpha+1})\leq F(\X_-^{n_\alpha}) - \lambda h_- \lvert\nabla F(\X_-^{n_\alpha})\rvert^2.
    \end{align*}
    For the quadratic case of the Polyack-Lojasiewicz condition \eqref{eg:pol}, we follow
    \begin{align}
        \begin{split}
         F(\X_-^{n_\alpha+1})&\leq F(\X_-^{n_\alpha}) - \mu\lambda h_- ( F(\X_-^{n_\alpha})-F(\X_\alpha^*)), \hspace{1cm} \X_-^{n_\alpha}\in\mathcal{N}_\alpha \\
        \Leftrightarrow \hspace{0.5cm} F(\X_-^{n_\alpha+1})-F(\X_\alpha^*) &\leq (1-\mu\lambda h_-)(F(\X_-^{n_\alpha})-F(\X_\alpha^*)) \\
        \Leftrightarrow \hspace{0.9cm} F(\X_-^{n_\alpha})-F(\X_\alpha^*) &\leq (1-\mu\lambda h_-)^n(F(\x^0)-F(\X_\alpha^*))\\
            &\leq \left(1-\frac{2\mu\gamma\lambda(1-\lambda)}{L}\right)^n (\min\limits_{i\in J} F(\x_i^0)-F(\x^*)).
        \end{split}
    \end{align}
    Now consider the error $E_{n_\alpha}:=F(\X_-^{n_\alpha})-F(\X_\alpha^*)$ in the case of general Lojasiewicz bound \eqref{eg:toj}, then
    \begin{align*}
        E_{n_{\alpha+1}} \leq E_{n_\alpha}-\lambda h_-(\mu E_{n_\alpha})^{\frac{1}{\beta}}, \hspace{1cm}  \X_-^{n_\alpha}\in\mathcal{N}_\alpha.
    \end{align*}
    This is a Riccati inequality \cite{ric} and the solution yields
    \begin{align*}
        F(\X_-^{n_\alpha})-F(\X_\alpha^*)&\lesssim (\lvert\min\limits_{i\in J} F(\x_i^0)-F(\x^*)\rvert^{-\frac{1}{\beta'}}
        + \lambda h_-\mu^{\frac{2}{\beta}}n_\alpha)^{-\beta'}, \hspace{1cm} \beta' = \frac{\beta}{2-\beta}>1 \\
        & \lesssim \lvert\min\limits_{i\in J} F(\x_i^0)-F(\x^*)\rvert + (\lambda h_-)^{-\beta'}\mu^{\frac{-2\beta'}{\beta}}n_\alpha^{-\beta'} \\
        & \lesssim \underbrace{\left(\frac{L}{2\mu\gamma\lambda(1-\lambda)}\right)^{\frac{\beta}{2-\beta}}\left(\frac{1}{\mu}\right)^{\frac{2}{2-\beta}}}_{C:=}\left(\frac{1}{n_\alpha}\right)^{\frac{\beta}{2-\beta}}
    \end{align*}
\end{proof}

\section{Conclusion and Outlook}
The swarm-based method is a new approach for non-convex optimization. By applying the model of a swarm, we use different agents to find a global minimum.
The swarm-based gradient descent is one method from a class of swarm-based methods. Also based on swarm behavior is the swarm-based random descent (SBRD) method \cite{zen}.
This method uses random descent directions to improve the global swarm position and to find the global minimum.
Therefore, following this work, further insights into swarm-based methods is possible and needed.\\\\

In this thesis we learned about the swarm-based gradient descent, how it works, how we can implement it, and we discussed the convergence rate.
We learned, that the key element of this method is the communication. By communicating the swarm avoids being trapped
in basins of local minima and therefore is able to reach the global minimum, as we saw by an example. Compared to the backtracking gradient descent method,
the SBGD method is less dependent on the initial starting positions. However, the performance of SBGD depends on the number of agents, the parameter $q$ and the used thresholds.
To get the best results, it is necessary  to find a balance between all of them. Although we can say in general, the more agents are used,
the more precise the result will be. \\\\

However, the fine-tuning aspect should be further looked at with more examples. Depending on the case, different options for $q$ and the thresholds might be considered best.
Moreover, it is possible to use other methods, than the \emph{backtracking line search} to compute a step length. The question is, how other methods affect the
SBGD-method and the results.
Furthermore, we need to discuss the SBGD method for higher dimensional functions. In the one-dimensional case we saw advantages of using SBGD compared to other
gradient methods. But does this still apply for the higher dimensional case, or is the SBGD method even more superior for this case?

\nopagebreak
\newpage
\addcontentsline{toc}{section}{Symbols}
\section*{Symbols\label{Notationsverzeichnis}}
\begin{longtable}{p{3.5cm} p{12cm}}
$J $									& Number of agents \\
$\x_i(t)$							& Position of the i-th agent \\
$m_i(t)$							& Mass of the i-th agent \\
$h$								  	& Time step \\
$\tilde{m}_i(t)$			& Relative mass of the i-th agent \\
$m_+$							    & Maximum mass $m_i(t)$ for $i \in J$ \\
$\psi_q (\tilde{m})$	& Influence of relative mass on time step using parameter $q>0$ \\
$F_{\max}(t)$					& Maximum height of swarm at time $t$ \\
$F_{\min}(t)$					& Minimum height of swarm at time $t$  \\
$\eta_i(t)$						& Relative height of the i-th agent \\
$\phi_p(\eta)$				& Degree of mass transition with parameter $p>0$ \\
$\x_+$							  & Worst positioned agent \\
$\gamma$                    & Shrinking factor for backtracking method
\end{longtable}


\newpage
\nocite{*}
 \printbibliography[keyword={quellen}]
\addcontentsline{toc}{section}{References}

\newpage
\addcontentsline{toc}{section}{Eigenständigkeitserklärung}
\section*{Eigenständigkeitserklärung}
Hiermit versichere ich an Eides statt, dass ich die vorliegende Arbeit selbstständig und ohne die Benutzung anderer als der angegebenen Hilfsmittel angefertigt habe. Alle Stellen, die wörtlich oder sinngemäß aus veröffentlichten und nicht veröffentlichten Schriften entnommen wurden, sind als solche kenntlich gemacht.
Die Arbeit ist in gleicher oder ähnlicher Form oder auszugsweise im Rahmen einer anderen Prüfung noch nicht vorgelegt worden. Ich versichere, dass die eingereichte elektronische Fassung der eingereichten Druckfassung vollständig entspricht.
\\\\\\\\
Janina Tikko
\end{document}